\documentclass[11pt,a4paper]{article}
\usepackage[T1]{fontenc}
\usepackage[utf8]{inputenc}
\usepackage[english]{babel}
\usepackage{lmodern,microtype,amsmath,amssymb,amsthm,mathtools}
\usepackage[margin=28mm]{geometry}
\usepackage{enumitem,graphicx,placeins}
\usepackage[colorlinks=true,linkcolor=blue,citecolor=blue,urlcolor=blue]{hyperref}
\newtheorem{thm}{Theorem}[section]
\newtheorem{prop}[thm]{Proposition}
\newtheorem{lemma}[thm]{Lemma}
\newtheorem{cor}[thm]{Corollary}
\theoremstyle{definition}

\newtheorem{example}[thm]{Example}
\theoremstyle{remark}
\newtheorem{remark}[thm]{Remark}
\newcommand{\R}{\mathbb R}

\DeclareMathOperator{\rank}{rank}
\DeclareMathOperator{\diag}{diag}
\DeclareMathOperator{\sgn}{sgn}
\DeclareMathOperator{\dist}{dist}

\DeclareMathOperator{\off}{off}
\title{Finding solutions with the Borsuk--Ulam theorem}
\author{Oleg R. Musin}
\date{}
\hypersetup{pdftitle={Finding solutions with the Borsuk-Ulam theorem},pdfauthor={Oleg R. Musin}}
\begin{document}
\maketitle
\begin{abstract}
Many existence theorems in combinatorics and geometry reduce to the
Borsuk--Ulam theorem or its generalizations. We study how these
existence arguments can be turned into effective numerical methods
when a simple example with a single known solution is available.
The idea is to deform this example into the given problem and follow
its solution during the deformation. We establish conditions under
which this procedure reaches a solution and estimate the work needed
for a prescribed accuracy. Mass partition problems illustrate the
method for both continuous objects and finite point sets. In a fixed
dimension, each step for point sets requires work proportional to
the number of points; we give conditions under which the entire
computation has this linear cost. We also study how changing the
initial example can lead to other solutions and support a search
for all of them.
\end{abstract}
\noindent\textbf{Mathematics Subject Classification (2020).}
Primary 65H20; Secondary 55M20, 52A35.

\smallskip
\noindent\textbf{Keywords.} Borsuk--Ulam theorem, numerical continuation,
mass partitions, ham-sandwich theorem, orthogonal partitions.

\section{Introduction}\label{sec:introduction}
The Borsuk--Ulam theorem is a source of existence results in
combinatorics and geometry. A common strategy is to describe the
desired configuration as a zero of a continuous map and use symmetry
to prove that such a zero exists. The purpose of this paper is to
develop the numerical side of this strategy. When a simple instance
of the problem has a single known solution, can we follow that
solution as the simple instance changes into the one we want to
solve?

The ham-sandwich theorem gives a familiar example: \emph{any $d$
bounded regions of positive volume in $\R^d$ can be simultaneously
bisected by one affine hyperplane}. In the plane, one line divides
each of two figures into equal areas; in space, one plane halves
each of three objects. The regions need not be convex or connected,
and they may overlap. To see the connection with topology, represent
an oriented hyperplane by its normalized coefficients. Including
the two limiting positions gives a sphere $S^d$ of parameters.
Record, for each region, the fraction of its volume on the positive
side minus one half. Reversing the orientation changes all signs.
We therefore obtain a continuous odd map $f:S^d\to\R^d$.
The classical Borsuk--Ulam theorem says that every such map has
a zero, which is exactly a common bisector~\cite{StoneTukey,Matousek}.
Thus ham-sandwich cuts belong directly to the class considered here.

A simple starting instance consists of $d$ balls whose centers
are affinely independent. A hyperplane bisects a ball precisely
when it passes through its center, so this instance has exactly
one unoriented bisecting hyperplane. Smooth radially symmetric
versions provide the models used in our numerical analysis.
We also consider orthogonal partitions, where several mutually
perpendicular cuts must satisfy simultaneous balancing conditions;
their parameter space replaces the sphere by an orthogonal
group~\cite{MusinOrth,MusinStiefel}.

Here is the continuation idea. Let $M$ be the space of possible
configurations, let $f(x)=0$ express the target problem, and let
$g(x)=0$ express the known model. Join the two maps by
\begin{equation}\label{eq:homotopy}
 F(x,t)=(1-t)g(x)+tf(x),\qquad 0\le t\le1.
\end{equation}
Every horizontal level of the cylinder $M\times[0,1]$ represents
an intermediate problem. Under regularity assumptions, its solutions,
viewed together in the cylinder, form curves. Figure~\ref{fig:cylinder}
shows the essential argument, with symmetry-related descriptions
identified. The curve starting at the unique model solution cannot
return to the bottom: an interval has two distinct endpoints, and
there is no second model solution. It must reach the top. Other
target solutions can be joined to one another by separate curves.

\begin{figure}[htbp]
\centering
\includegraphics[width=.90\linewidth]{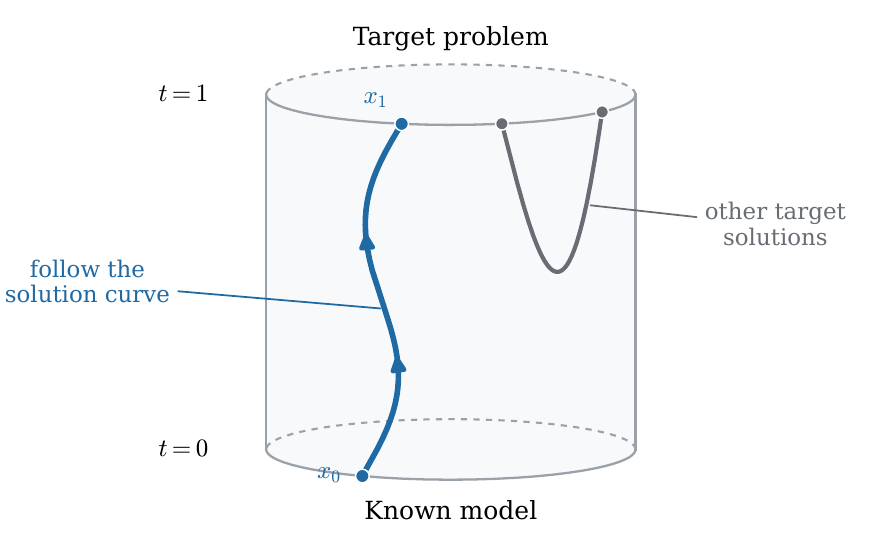}
\caption{Following a solution from the model at $t=0$ to the target
at $t=1$. The blue curve reaches one target solution. The gray arc
joins two other target solutions. Symmetry-related configurations
are identified; the cylinder is schematic.}
\label{fig:cylinder}
\end{figure}

This geometric argument appears in the proof of the Borsuk--Ulam
theorem in Matou\v{s}ek~\cite[Section~2.2]{Matousek}, following
constructive proofs such as~\cite{Barany,MeyersonWright}. It extends
to manifolds with suitable finite-group symmetries~\cite{Musin2012}.
Theorem~\ref{thm:continuation} states the continuation principle in
this setting. A regular curve may turn back in the parameter $t$;
following the curve and always increasing $t$ are therefore different
requirements. We specify the additional condition for the latter,
and also discuss how to follow curves through turns.

Numerically, we take a small step along the solution curve and
correct the resulting error before taking the next step.
Theorem~\ref{thm:numerical} bounds the step and the error, while
Proposition~\ref{prop:oracle-work} includes the cost of evaluating
the equations to the required accuracy. These estimates use the
stability of the local equations along the chosen curve.
Homotopy continuation is classical~\cite{AlexanderYorke,ChowEtAl,AllgowerGeorg};
our contribution is to connect the one-solution model with explicit
tracking estimates and the maps needed for mass partitions.

For finite point sets, a bisector leaves at most half of each set
in either open half-space; points on the plane are excluded from
both counts. The counts jump when a plane crosses a point.
We therefore spread each point into a small mass, solve the smooth
problem, and recover a discrete cut. For odd set sizes and points
avoiding affine degeneracies, every exact bisector passes through
one point of each set. These points allow exact reconstruction from
a sufficiently accurate smooth answer (Section~\ref{sec:points}).

A single continuation finds one solution. Changing the model can
lead to different endpoints: for ham-sandwich cuts, suitable model
centers reach every regular cut (Theorem~\ref{cor:all-ham}).
We explore these parameters and record the answer in each region.
Regions leading to the same answer may be disconnected.
Section~\ref{sec:all} gives conditions for a finite search without
a connectedness assumption.

With dimension fixed, each step takes linear arithmetic work in
the total number of points. The full computation has this linear
cost when accuracy and the geometric bounds controlling its steps
are uniform over the input family.
Lo, Matou\v{s}ek, and Steiger~\cite{LoMatousekSteiger} give an optimal
linear-time planar ham-sandwich algorithm, and Fakhrutdinov and
Musin~\cite{FakhrutdinovMusin} obtain optimal linear time for planar
orthogonal four-partitions. Their general constructions have
polynomial bounds whose exponents grow with dimension. On families
satisfying our tracking bounds, continuation offers a numerical
approach with linear cost. As in~\cite{ShubCondition,BeltranLeykin},
we distinguish path-dependent step counts from the cost per step.

\section{Zero arcs and PL continuation}\label{sec:topology}
We first give the topological conditions that make a solution curve
reach the target. The smooth argument uses regular arcs; its
piecewise-linear counterpart follows a finite chain of segments.

Let $n\ge1$ and let $M$ be a closed smooth $n$-manifold, possibly disconnected, with
a free smooth action of a finite group $G$. Let $V$ be an
$n$-dimensional real representation of $G$. Metrics are chosen
$G$-invariant. A map is \emph{equivariant} if $f(ax)=af(x)$ for
$a\in G$, and is \emph{regular at its zeros} if its differential
there is invertible. Solutions are counted modulo $G$.

\begin{thm}[Continuation from a regular model]\label{thm:continuation}
Let $F:M\times[0,1]\to V$ be $C^1$ and equivariant, with
$g=F(\cdot,0)$ and $f=F(\cdot,1)$. Suppose that
$g^{-1}(0)=Gx_0$, both boundary maps are regular at their zeros,
and $\rank DF=n$ at every interior zero. The zero component through
$(x_0,0)$ is a compact regular arc ending at $t=1$.

If $J=D_xF$ is invertible along this arc, it is a graph $x=x(t)$ and
\begin{equation}\label{eq:ode}
 x'(t)=-J(x(t),t)^{-1}F_t(x(t),t),\qquad x(0)=x_0.
\end{equation}
If $J$ is invertible at every zero of $F$, every time slice has
exactly one zero orbit.
\end{thm}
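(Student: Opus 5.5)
The plan is to apply the regular value theorem to $F$ on the manifold with boundary $M\times[0,1]$, and then classify one-dimensional compact manifolds. First I check that $0$ is a regular value of $F$ in the sense appropriate for manifolds with boundary. At interior zeros this is the hypothesis $\rank DF=n$. At a boundary zero $(x,0)$ or $(x,1)$, regularity of $g$ or $f$ means that $D_xF$ alone is already onto $V$. Hence $0$ is regular for $F$ and also for its restriction to $\partial(M\times[0,1])=M\times\{0,1\}$. The standard result then says $Z=F^{-1}(0)$ is a compact embedded $1$-dimensional $C^1$ submanifold with $\partial Z=Z\cap(M\times\{0,1\})$. Moreover, $Z$ is transverse to the boundary: $T_{(x,t)}Z=\ker DF$ is not contained in $TM\times\{0\}$, because $D_xF$ is injective there. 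Compactness follows because $M$ is closed and $Z$ is closed in the compact cylinder.

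Next I use the classification: each component of $Z$ is a circle or a compact arc, with $\partial Z$ consisting of arc endpoints. The component $C$ through $(x_0,0)$ meets the boundary, so it is an arc, and $(x_0,0)$ is one of its endpoints. I then suppose the other endpoint $(y,s)$ also lies at $t=0$. In that case $y\in g^{-1}(0)=Gx_0$, so $y=ax_0$ with $a\neq e$, since the endpoints are distinct. Equivariance shows that $aC$ is also a zero component, and it contains $(ax_0,0)\in C$, so $aC=C$. The map $a$ is a homeomorphism of the arc $C$ that swaps its two endpoints. An orientation-reversing self-map of an interval has a fixed point, and this contradicts freeness of the action. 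Therefore the other endpoint lies at $t=1$. This involution/fixed-point step is the main point needing care; the rest of the argument is routine.

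Now assume $J=D_xF$ is invertible along $C$. The implicit function theorem shows that near each point of $C$ the zero set is locally a $C^1$ graph $x=x(t)$. Equivalently, the projection $\pi:C\to[0,1]$ has nonvanishing derivative, because a tangent vector $(\xi,\tau)$ satisfies $J\xi+F_t\tau=0$, and $\tau=0$ would force $\xi=0$. So $\pi$ is a local diffeomorphism of arcs that sends the endpoints to $0$ and $1$. It is therefore strictly monotone and bijective, which makes $C$ a graph over $[0,1]$. Differentiating $F(x(t),t)=0$ gives \eqref{eq:ode}.

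Finally, assume $J$ is invertible at every zero of $F$. Then the same argument applies to every component of $Z$. Every component is a graph over an interval, so no component is a circle (its projection would have a local extremum). Every arc has one end at $t=0$ and one at $t=1$. Circles are excluded and arcs are monotone, so each slice $t$ meets each component exactly once. The components of $Z$ starting at $t=0$ are exactly the $aC$ for $a\in G$. These form a single $G$-orbit, and they are distinct components because the action is free on $Gx_0$. It follows that every time slice contains exactly one zero orbit.
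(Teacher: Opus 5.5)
Your proposal is correct and follows essentially the paper's route: the regular level theorem on the cylinder, the classification of compact one-manifolds, and the tangent relation $J\xi+F_t\tau=0$ for the graph and slice statements. The only difference is in how freeness is used. The paper passes to the quotient one-manifold $Z/G$, which has a single initial endpoint, and lifts through the covering, which is trivial over an interval; you stay in $Z$ and rule out a return to $(ax_0,0)$ by noting that $a$ would swap the endpoints of $C$ and hence fix a point, contradicting freeness.
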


\begin{proof}
The regular level theorem makes $Z=F^{-1}(0)$ a one-manifold in
the cylinder's interior. At a boundary zero, spatial invertibility
expresses $Z$ as a graph over an inward collar coordinate. Thus $Z$
is a compact $C^1$ one-manifold with
$\partial Z=g^{-1}(0)\sqcup f^{-1}(0)$. The free action restricts
to $Z$, so $Z/G$ is also a compact one-manifold.

The component containing the unique initial zero orbit is an interval.
Its two endpoints are distinct. The initial boundary contains only
one point of $Z/G$, so the other endpoint must lie on the target boundary.
The covering $Z\to Z/G$ is trivial over an interval; its lift through
$(x_0,0)$ is the asserted arc. A finite cover by regular coordinate
graphs shows that its length is finite, and the inward direction
fixes its orientation.

A tangent vector $(\xi,\tau)$ satisfies $J\xi+F_t\tau=0$.
When $J$ is invertible, a nonzero tangent has $\tau\ne0$.
Its sign is initially positive and cannot change, so projection to
$t$ is a diffeomorphism onto $[0,1]$. Differentiation gives
\eqref{eq:ode}. If $J$ is invertible at every zero, no zero component
can be a circle or an interval with both endpoints on the same
boundary: $t$ would have a critical point. Hence every quotient
component joins the two boundaries, and the unique initial endpoint
leaves exactly one component.
\end{proof}

More generally, if there are $r$ initial zero orbits, $a$ quotient
intervals joining two initial endpoints, and $c$ crossing intervals,
then $r=2a+c$. Odd $r$ guarantees a crossing, but does not identify
a crossing from every start.

\begin{lemma}[Relative regularization]\label{lem:regularize}
A smooth equivariant homotopy whose boundary maps are regular can
be perturbed arbitrarily little, keeping it fixed near the boundary,
to make its full derivative surjective at every interior zero.
\end{lemma}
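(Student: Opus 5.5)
The plan is an equivariant parametric transversality argument carried out on the orbit space. Because $G$ acts freely, $M\to M/G$ is a covering and $M/G$ is a closed manifold. The first step is to note that regularity at the boundary is an open condition. Each boundary map is regular at its finitely many zero orbits, so $F$ has no zeros near $t=0$ and $t=1$ other than these. There is therefore $\varepsilon>0$ such that $D_xF$ is invertible at every zero of $F$ in $M\times([0,\varepsilon]\cup[1-\varepsilon,1])$. A $C^1$-small perturbation supported in $M\times[\varepsilon/2,1-\varepsilon/2]$ keeps this property on the collars. It suffices to take it below the uniform lower bound of $|F|$ away from the zeros and below the invertibility margin near them, so no new degenerate zeros appear there.

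The second step builds a finite-dimensional family of equivariant perturbations. I would cover the compact set $M\times[\varepsilon/2,1-\varepsilon/2]$ by finitely many small open sets $U_i$ whose translates $aU_i$, $a\in G$, are pairwise disjoint; freeness of the action makes this possible. Let $\chi_i$ be a bump function supported in $U_i\times(\varepsilon/4,1-\varepsilon/4)$ and equal to $1$ on a smaller set, where the smaller sets still cover the middle region. Fix a basis $e_1,\dots,e_n$ of $V$. Then define the equivariant maps
\[
\phi_{ij}(x,t)=\sum_{a\in G}\chi_i(a^{-1}x,t)\,a e_j .
\]
For $s=(s_{ij})\in\R^N$ set $F_s=F+\sum s_{ij}\phi_{ij}$. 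Each $F_s$ is smooth, equivariant, and equal to $F$ near the boundary.

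The third step is the key transversality claim: the map $\Phi(x,t,s)=F_s(x,t)$ is a submersion at every zero. Take a point $(x,t)$ in the middle region. It lies in some set where $\chi_i=1$, and there the summands with $a\ne 1$ vanish by the disjointness of the translates. Hence $\partial\Phi/\partial s_{ij}=e_j$ for $j=1,\dots,n$, so the $s$-derivatives alone span $V$. On the collars the zeros are already regular in $x$.

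The parametric transversality theorem (Sard applied to the projection $\Phi^{-1}(0)\to\R^N$) then gives that for almost every small $s$, the map $F_s$ has $0$ as a regular value. In particular $DF_s$ is surjective at every interior zero. Choosing such an $s$ of arbitrarily small norm yields the required perturbation. It is arbitrarily small in $C^1$, and indeed in every $C^k$, because the $\phi_{ij}$ are fixed smooth maps.

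The main obstacle I expect is reconciling equivariance with local freedom. A generic perturbation of $F$ is not equivariant, and averaging a non-equivariant perturbation over $G$ might destroy the transversality it was chosen to produce. This is exactly why the construction requires $U_i\cap aU_i=\emptyset$ for $a\ne1$: near any point, the averaged bump reduces to a single free summand. Freeness of the action is therefore essential, and it is the hypothesis I would check carefully in that step.
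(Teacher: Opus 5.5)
Your proposal is correct and follows essentially the paper's route. The paper phrases it via sections of the associated bundle $(M\times[0,1])\times_GV$ over the quotient cylinder, spanned by finitely many local frames times bump functions; your $G$-averaged bumps over sets $U_i$ with pairwise disjoint translates are exactly those sections lifted to $M\times[0,1]$. Both arguments then apply Sard to the projection onto the coefficients and use smallness of the coefficients to keep the collar zeros regular.

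One point should be stated more carefully: $\Phi$ is a submersion at its zeros only after you restrict $s$ to a small ball. For large $s$, the perturbation need not span $V$ where $\varepsilon/4<t<\varepsilon/2$ (or $1-\varepsilon/2<t<1-\varepsilon/4$), so degenerate zeros could appear there. Your phrase ``almost every small $s$'' already makes this restriction implicitly.
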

\begin{proof}
Equivariant maps correspond to sections of the associated bundle
$(M\times[0,1])\times_GV$ over the quotient cylinder.
Boundary regularity gives regular collars. Choose smaller collars
on which the perturbation is identically zero. On the compact
complement of the larger collars, finitely many local frames
multiplied by bump functions, supported away from the smaller
collars, span every fiber. Adding their small linear combinations
gives a transverse universal section on this compact region;
on the collars transversality persists for small coefficients. Sard's theorem
applied to its zero set and projection to the coefficients gives
transverse sections for almost every small choice. Smallness preserves
regularity in the collars. Lifting proves the assertion.
\end{proof}

\begin{prop}[Continuous zero theorem]\label{prop:continuous}
If a smooth equivariant $g:M\to V$ has one regular zero orbit,
every continuous equivariant homotopy starting at $g$ has a compact
connected zero set containing $(x_0,0)$ and meeting $t=1$.
In particular, every continuous equivariant $f:M\to V$ has a zero.
\end{prop}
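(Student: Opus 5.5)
The plan is to approximate the continuous homotopy by smooth regular ones, apply Theorem~\ref{thm:continuation} to each approximation, and pass to a limit of compact connected sets. Let $H:M\times[0,1]\to V$ be a continuous equivariant homotopy with $H(\cdot,0)=g$. Since $g$ has a regular zero orbit $Gx_0$, it stays regular under $C^1$-small perturbations. For this reason I first modify $H$ near the bottom. Put $\tilde H(x,t)=g(x)$ for $t\le\delta$ and $\tilde H(x,t)=H(x,(t-\delta)/(1-\delta))$ for $t\ge\delta$; this is continuous and equivariant, and its zero set is $Gx_0\times[0,\delta]$ glued to a rescaled copy of $H^{-1}(0)$. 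Working with $\tilde H$ and then reparametrizing is harmless, so I may assume $H=g$ on $M\times[0,\delta]$.

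Next I approximate $H$ uniformly by smooth equivariant maps $H_k$, with $\|H_k-H\|_\infty<1/k$ and $H_k=g$ near $t=0$. Smoothing is done locally, e.g.\ by convolution in charts with a partition of unity. Equivariance is restored by averaging, $\frac1{|G|}\sum_{a}a^{-1}H_k(ax,t)$. A cutoff in $t$ keeps the map equal to $g$ near the bottom, and the error there is small because $H=g$ on $[0,\delta]$. At the top, $H_k(\cdot,1)$ need not be regular. I handle this by adding a small generic equivariant perturbation supported near $t=1$, for instance by extending the top a little and using Sard on a finite-parameter family of equivariant sections, as in the proof of Lemma~\ref{lem:regularize}. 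That makes the top regular. Lemma~\ref{lem:regularize} then gives a further perturbation, fixed near both boundaries, so that $\rank DH_k=n$ at interior zeros. All perturbations are taken below $1/k$. Theorem~\ref{thm:continuation} now yields, for each $k$, a compact connected arc $A_k\subset H_k^{-1}(0)$ from $(x_0,0)$ to the level $t=1$.

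The $A_k$ are compact connected subsets of the compact metric space $M\times[0,1]$. Pass to a subsequence converging in the Hausdorff metric to a compact set $A$; the Hausdorff limit of compact connected sets is connected. Also $(x_0,0)\in A$, and $A$ meets $t=1$ because each $A_k$ does and that level is closed. Every point of $A$ is a limit of points $p_k\in A_k$ with $|H(p_k)|\le|H_k(p_k)|+1/k=1/k$, so $H$ vanishes on $A$ by continuity. Hence the component of $H^{-1}(0)$ containing $(x_0,0)$ contains $A$. It is compact, being closed in a compact space, and it meets $t=1$. Undoing the reparametrization of the first step gives the same conclusion for the original $H$. For the final claim, given a continuous equivariant $f$, apply this to $H(x,t)=(1-t)g(x)+tf(x)$, which is equivariant because the action on $V$ is linear. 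The zero set meets $t=1$, so $f$ has a zero.

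I expect the main obstacle to be the approximation step. The smooth approximants must simultaneously be equivariant, agree with $g$ near $t=0$, and have a regular top slice, since Theorem~\ref{thm:continuation} and Lemma~\ref{lem:regularize} both assume regular boundary maps. The bottom is secured by the collar trick and the averaging. For the top, I would first try the Sard argument over a finite-dimensional family of equivariant bump sections spanning the fibers of $(M\times[0,1])\times_GV$ over the compact quotient; this is exactly the universal-section device already used in Lemma~\ref{lem:regularize}, applied at the slice $t=1$.
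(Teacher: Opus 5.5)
Your proposal is correct and follows essentially the same route as the paper. Both arguments freeze $g$ on an initial collar, smooth in charts and average over $G$, make the top slice transverse, apply Lemma~\ref{lem:regularize} and Theorem~\ref{thm:continuation}, and pass to a Hausdorff limit of the crossing arcs. The only cosmetic difference is that you use a fixed collar and then push the limit set back through the continuous map $(x,t)\mapsto(x,\max\{0,(t-\delta)/(1-\delta)\})$, whereas the paper lets the collar shrink as the approximations improve.
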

\begin{proof}
Approximate the homotopy uniformly by smooth equivariant maps,
keeping $g$ fixed. This relative approximation follows by smoothing
in charts and averaging over $G$, after making the homotopy constant
on a shrinking initial collar. Perturb each target to a transverse
section and apply Lemma~\ref{lem:regularize}. The perturbations can
tend to zero. Theorem~\ref{thm:continuation} supplies crossing arcs
with the same chosen initial lift $(x_0,0)$.
A subsequence converges in Hausdorff distance, since the space of
nonempty compact subsets of the cylinder is compact. Its limit
contains $(x_0,0)$ and meets the closed target boundary,
lies in the original zero set by uniform convergence, and is connected:
a separation into disjoint compact sets would separate all sufficiently
close arcs. This is the connectedness theorem for Hausdorff limits
of compact connected sets. Apply this to~\eqref{eq:homotopy}
for the last assertion.
\end{proof}

This limiting connected set need not be an arc. Numerical path
following therefore uses the rank assumptions, not just the continuous
existence statement. The same quotient-interval proof works on a
compact free $G$-bordism with the prescribed boundary maps; this is
the setting of~\cite[Theorem~1]{Musin2012}. Both applications here
use cylinders.

\begin{example}\label{ex:selection}
For the antipodal circle,
\[
 F(\theta,t)=(1-t)\sin\theta+t\sin3\theta
 =\sin\theta(1-2t+4t\cos^2\theta).
\]
The selected branch is $\theta=0$, with $F_\theta=1+2t>0$.
The target has three zero orbits. The other two are joined by
the arc $t=(4\sin^2\theta-2)^{-1}$,
$\pi/3\le\theta\le2\pi/3$, with a fold at $(\pi/2,1/2)$.
There $F_t=-2$, so the full zero curve is regular.
Replacing $F(\theta,t)$ by $F(\theta-2j\pi t/3,t)$, for
$j=0,1,2$, preserves both endpoint maps but selects the three
different target orbits. Thus the endpoint depends on the homotopy.
\end{example}

\begin{prop}[Finite PL continuation]\label{prop:pl}
Let $K$ be a finite equivariant triangulation of the cylinder over
a closed free $G$-PL $n$-manifold, with both boundary slices as subcomplexes.
Suppose $F$ is equivariant and affine on each simplex,
$F^{-1}(0)\cap K^{(n-1)}=\varnothing$, and its initial zeros form
one orbit. Following zero segments from that orbit reaches the
target after finitely many simplex transitions. Rational vertex
values allow exact rational transitions in barycentric coordinates.
\end{prop}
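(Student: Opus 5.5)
The plan is to run the proof of Theorem~\ref{thm:continuation} with simplices in place of regular coordinate charts. First I describe $Z=F^{-1}(0)$ locally. The cylinder has dimension $n+1$ and $F$ takes values in $V\cong\R^n$. On an $(n+1)$-simplex $\sigma$, the zero set of the affine map $F|_\sigma$ is the intersection of $\sigma$ with an affine subspace. Since $F^{-1}(0)$ misses $K^{(n-1)}$, this zero set cannot contain a point of a face of dimension $\le n-1$.

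For an $n$-simplex $\tau$, the affine map $F|_\tau$ into $\R^n$ has at most one zero, namely an interior point, unless its zero set is positive-dimensional. In that case the zero set would be an affine subspace of $\tau$ through an interior point, and it would reach $\partial\tau\subset K^{(n-1)}$, which is excluded. So each $n$-face meets $Z$ in at most one interior point, and $F|_\tau$ is then nondegenerate. For an $(n+1)$-simplex, the same argument shows that $Z\cap\sigma$ is either empty or a segment. Its endpoints lie on $\partial\sigma$, hence on $n$-faces of $\sigma$, and the segment cannot be degenerate or have larger dimension. In particular $Z\cap\sigma$ meets exactly two of the facets of $\sigma$, each in one point.

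Next I assemble these segments. Each interior $n$-face lies in exactly two $(n+1)$-simplices, because the cylinder is a PL manifold with boundary. Each boundary $n$-face, lying in a boundary slice, lies in exactly one. Hence a zero point on an interior $n$-face is the common endpoint of exactly two segments, and a zero on a boundary face is the endpoint of exactly one segment. Therefore $Z$ is a compact PL $1$-manifold with $\partial Z=g^{-1}(0)\sqcup f^{-1}(0)$. It is a finite union of segments, one per simplex at most, since $K$ is finite.

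Equivariance of $K$ and $F$, together with freeness of the action, makes $G$ act freely on $Z$. So $Z/G$ is a finite graph in which every vertex has degree $2$ except the boundary zeros, which have degree $1$. The quotient-interval argument from Theorem~\ref{thm:continuation} now applies verbatim. The component through the unique initial orbit is an interval. Its other endpoint cannot be a second initial orbit, so it lies on the target slice. Following segments from $x_0$ never revisits a simplex, because a repeat would make a vertex of degree $\ge3$ or close a circle through a degree-one point. So the walk terminates after at most the number of $(n+1)$-simplices.

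For the exactness claim, each transition is computed as follows. Entering $\sigma$ at a known face point, we find the other facet meeting $Z$. For each facet $\tau$ of $\sigma$, write the zero of $F|_\tau$ in barycentric coordinates: these solve $\sum_i\lambda_iF(v_i)=0$, $\sum_i\lambda_i=1$. This linear system has rational coefficients, so Cramer's rule gives rational solutions. The facet carries a zero exactly when all $\lambda_i>0$. Nondegeneracy guarantees that exactly two facets pass this test, one being the entry facet.

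The main point needing care is the local structure step. One must check that the hypothesis $F^{-1}(0)\cap K^{(n-1)}=\varnothing$ really forces each $n$-face to carry at most one zero and each top simplex to carry at most one segment, with no degenerate flat pieces. This is the dimension-counting argument on affine zero sets given above.
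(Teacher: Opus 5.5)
Your proposal is correct and follows the paper's proof in substance. Skeleton avoidance gives each $n$-face at most one zero, with invertible face matrix. The paper says this via vertices of the barycentric zero polytope and independence of the columns $(1,F(v_i))$; you say it via an affine zero set through an interior point reaching the boundary, which is the same argument. From there you get one segment per top simplex, the two-versus-one adjacency count, and the quotient interval argument, exactly as the paper does.

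Two details deserve a line each. First, that $Z\cap\sigma$ is a genuine segment (neither a point nor of dimension $\ge2$) should be checked at a boundary zero $z$ in a facet $\tau$. Either the affine zero set of $F|_\sigma$ is a line crossing $\operatorname{aff}\tau$ transversally, so it enters $\sigma$. Or it meets $\operatorname{aff}\tau$ in a positive-dimensional set, contradicting nondegeneracy of $F|_\tau$. Second, in the exactness step a facet with singular matrix carries no zero, so Cramer's rule is only applied to invertible facet systems.
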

\begin{proof}
The dimension count is $(n+1)-n=1$ in a top simplex, $n-n=0$
in an $n$-face, and negative in smaller faces. Thus crossings of
$n$-faces are allowed; only the $(n-1)$-skeleton is excluded.
In a top-dimensional simplex with vertices $v_i$, the zero set is
\[
 \{\lambda_i\ge0:\ \textstyle\sum_i\lambda_i=1,
                    \ \sum_i\lambda_iF(v_i)=0\}.
\]
A vertex of this polytope has at most $n+1$ positive coordinates.
Skeleton avoidance forces exactly $n+1$, and the corresponding
columns $(1,F(v_i))$ are independent: otherwise a dependence could
move positive coordinates until one vanished, giving a forbidden
zero. The nonempty zero polytope is consequently a line segment.
Its endpoints lie in interiors of $n$-faces; invertibility of the
face matrix lets a zero extend into either adjacent top simplex.
Interior faces have two such simplices and boundary faces one.
The segments therefore form a finite one-manifold. The quotient
interval argument proves the assertion. The selected interval
cannot revisit a segment, and its endpoints are solutions of
linear systems with the given vertex values.
\end{proof}

This is the finite path construction underlying the PL proofs
in~\cite[Section~2.2]{Matousek} and~\cite[Lemmas~3--5]{Musin2012}.

\section{Small-step continuation}\label{sec:numerical}
The preceding section supplies a path. We now construct small
steps that stay near it and bound the error left after each correction.

Write $J=D_xF$ and $b=F_t$. In a local chart the method is
\begin{equation}\label{eq:algorithm}
 \begin{aligned}
 J(y_k,t_k)v_k&=-b(y_k,t_k),&p_k&=y_k+hv_k,\\
 J(p_k,t_k+h)s_k&=-F(p_k,t_k+h),&y_{k+1}&=p_k+s_k.
 \end{aligned}
\end{equation}
These are tangent prediction and one Newton correction. On a manifold
the additions are chart operations. The gradient of $\|F\|^2/2$
vanishes at zeros and does not itself give a continuation direction.

\begin{thm}[Tracking and final refinement]\label{thm:numerical}
Suppose a $C^2$ homotopy $F$ has a zero graph $x(t)$,
$0\le t\le1$, with a known initial zero, and $J$ is invertible
along the graph, including its endpoints. There are constants
$h_0,\eta_0,C_0,C_1>0$ such that, for any requested maximum step
$\bar h>0$, with
\begin{equation}\label{eq:mesh}
 K=\left\lceil\max\{\bar h^{-1},h_0^{-1}\}\right\rceil,
 \qquad h=K^{-1},\qquad t_k=kh,
\end{equation}
algorithm~\eqref{eq:algorithm} is well defined and preserves the
error bound $\eta_0$. From the exact initial zero it satisfies
$\dist(y_k,x(t_k))\le C_0h^4$ for $1\le k\le K$.
Further Newton correction at $t=1$ attains residual at most $\delta$
in at most
\begin{equation}\label{eq:main-count}
 m=\left\lceil\log_2\left(1+\log_2^+(C_1/\delta)\right)\right\rceil,
 \qquad \log_2^+s=\max\{0,\log_2s\},
\end{equation}
iterations. The total number of linear solves is at most $2K+m$.
\end{thm}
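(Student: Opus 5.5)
The plan is a standard predictor--corrector analysis in a uniform tube around the graph. All constants will come from compactness. Then a one-step recursion shows that Newton's quadratic contraction absorbs the predictor error, so errors do not accumulate.

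\emph{Uniform setup.} The graph $\{(x(t),t)\}$ is compact and $F$ is $C^2$. I would cover it by finitely many charts and fix a Lebesgue number. I would then choose a tube radius $\rho>0$ such that, whenever $\dist(y,x(t))\le\rho$ and $|s-t|\le h_0$, three things hold. First, $y$ lies in a common chart with $x(t)$ and $x(s)$. Second, $\|J(y,s)^{-1}\|\le\beta$. Third, $\|D^2F\|\le\gamma_2$ and $\|Db\|\le L$. These bounds follow from invertibility of $J$ along the graph, including the endpoints, together with continuity. By differentiating \eqref{eq:ode}, $x(t)$ is $C^2$ with $\|x''\|\le C_x$.

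The one-step estimates need two standard facts.

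\emph{Predictor.} Write $e_k=\dist(y_k,x(t_k))$. Then
\[
v_k-x'(t_k)=-J(y_k,t_k)^{-1}b(y_k,t_k)+J(x(t_k),t_k)^{-1}b(x(t_k),t_k)=O(e_k).
\]
Taylor expansion of $x$ then gives
\[
\dist(p_k,x(t_{k+1}))\le(1+Ah)e_k+Bh^2 .
\]

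\emph{Newton.} The Kantorovich / Newton--Mysovskikh estimate gives the following. If $\dist(p,x(s))=\varepsilon$ is below a threshold $\varepsilon_0\le\rho$, then the corrected point satisfies $\dist(p+s,x(s))\le\gamma\varepsilon^2$, with $\gamma=\beta\gamma_2/2$ up to chart distortion. Here $x(s)$ is the unique zero in the tube.

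\emph{Induction.} Set $\eta_0=\min\{\rho,\varepsilon_0,1/(2\gamma)\}/2$ and $C_0=2\gamma B^2$. Shrink $h_0$ so that $(1+Ah_0)C_0h_0^4+Bh_0^2\le\varepsilon_0$, $C_0h_0^4\le\eta_0$, and $(1+Ah_0)C_0h_0^2\le(\sqrt2-1)B$. Then $e_k\le C_0h^4$ implies
\[
e_{k+1}\le\gamma\bigl((1+Ah)C_0h^4+Bh^2\bigr)^2\le\gamma\cdot 2B^2h^4=C_0h^4 .
\]
The base case is $e_0=0$. The choice \eqref{eq:mesh} guarantees $h\le h_0$, $h\le\bar h$, and $Kh=1$, so the final time is exactly $t=1$. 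Well-definedness of the linear solves follows because every iterate stays in the tube, where $\|J^{-1}\|\le\beta$.

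\emph{Final refinement and count.} At $t=1$ we start from error $e\le\eta_0$ with $\gamma e\le 1/2$, and iterate Newton. The recursion $\gamma e_{j+1}\le(\gamma e_j)^2$ gives $\gamma e_j\le 2^{-2^j}$. The residual is at most $L' e_j$, where $L'$ is a Lipschitz bound on $F(\cdot,1)$ in the tube. So with $C_1=L'/\gamma$, residual $\le\delta$ holds once $2^j\ge\log_2^+(C_1/\delta)$. The $m$ of \eqref{eq:main-count} satisfies $2^m\ge1+\log_2^+(C_1/\delta)$, so it suffices; the $+1$ also covers the case $\delta\ge C_1$. Each continuation step uses two linear solves and each refinement step one, which gives the total $2K+m$.

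\emph{Main obstacle.} The delicate point is making all constants uniform on manifold charts. In particular, $\dist$, the chart additions, and the uniqueness of the zero in the tube must be compatible. I would handle this by shrinking $\rho$ below the Lebesgue number, so that every step is carried out inside a single chart where the metric is bi-Lipschitz equivalent to the Euclidean one. The chart's bi-Lipschitz constants are then absorbed into $A$, $B$, $\gamma$, and $L'$.
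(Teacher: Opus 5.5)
Your proposal is correct and follows essentially the paper's route: compactness gives uniform tube and chart bounds, the predictor error is $O(e)+O(h^2)$, and the quadratic Newton bound absorbs it into an $O(h^4)$ invariant, followed by doubly exponential refinement at $t=1$. The only difference is bookkeeping: the paper carries the invariant $q=Ch^2/2$ and reads off $9\kappa q^2$, while you carry $C_0h^4$ directly. One small point is left implicit, namely the robust reading of ``preserves the error bound $\eta_0$'': every iterate with error at most $\eta_0$, not only your exact-start trajectory, should stay within $\eta_0$, which the paper checks as $e\le r/8\Rightarrow$ predictor error $\le r/2\Rightarrow$ corrected error $\le r/16$. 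With your $2\eta_0\le\varepsilon_0$ and $\gamma\eta_0\le1/4$, this follows once you also require, say, $Ah_0\le1/2$ and $Bh_0^2\le\eta_0/2$. For the same reason the chart factor ($Q^2$ rather than $1$) that multiplies $e_k$ in the intrinsic predictor bound is harmless, because Newton squares the predictor error.
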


Here and below the tracking bounds first refer to exact arithmetic.
We give explicit constants and then account for evaluation errors.
In a fixed Euclidean chart assume, on the tube
$\|y-x(t)\|\le\rho$, that
\begin{equation}\label{eq:bounds}
 \sigma_{\min}(J(x(t),t))\ge\alpha>0,\quad
 \|b(x(t),t)\|\le B,\quad \|D^2F\|\le L,\quad \|J\|\le\Lambda,
\end{equation}
where $L>0$ bounds the full Hessian. Put
\begin{equation}\label{eq:constants}
 \begin{gathered}
 V=B/\alpha,\quad \kappa=L/\alpha,\quad C=\kappa(1+V^2),
 \quad D=2\kappa(1+V),\\
 r=\min\{\rho,(4\kappa)^{-1}\},\quad \eta=r/8,\quad
 h_* =\min\{1,D^{-1},\sqrt{r/(2C)}\}.
 \end{gathered}
\end{equation}

\begin{lemma}\label{lem:estimates}
One has $\|x'\|\le V$ and $\|x''\|\le C$. For
$e=\|y-x(t)\|\le r$, $v=-J(y,t)^{-1}b(y,t)$, and
$\mathcal N_t(y)=y-J(y,t)^{-1}F(y,t)$,
\begin{align}
 \|v-x'(t)\|&\le De,\label{eq:velocity-error}\\
 \|y+hv-x(t+h)\|&\le(1+Dh)e+Ch^2/2,\label{eq:prediction-error}\\
 \|\mathcal N_t(y)-x(t)\|&\le\kappa e^2,
 \qquad \alpha e/2\le\|F(y,t)\|\le\Lambda e.
 \label{eq:newton-error}
\end{align}
\end{lemma}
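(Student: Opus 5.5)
Throughout, work in the fixed Euclidean chart on the tube $\|y-x(t)\|\le\rho$. For $e\le r\le\rho$, every point of the segment from $x(t)$ to $y$ lies in the tube, so~\eqref{eq:bounds} applies along all segments used below.

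\emph{Derivative bounds.} Start from~\eqref{eq:ode}, which is available by Theorem~\ref{thm:continuation} because $J$ is invertible along the graph. Since $\sigma_{\min}(J)\ge\alpha$, we get $\|x'\|=\|J^{-1}b\|\le B/\alpha=V$. Differentiating the identity $J(x(t),t)x'(t)+b(x(t),t)=0$ along the curve gives
\[
 Jx''=-\bigl(D^2F\bigr)[(x',1),(x',1)].
\]
The vector $(x',1)$ has norm at most $\sqrt{1+V^2}$, so its squared norm is at most $1+V^2$. Hence $\|x''\|\le \alpha^{-1}L(1+V^2)=C$. The only care needed is to state that the Hessian bound $L$ controls the full second derivative in $(x,t)$, which is exactly how $L$ was defined.

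\emph{Velocity error.} First bound $J(y,t)^{-1}$ off the curve. The bound on $D^2F$ gives $\|J(y,t)-J(x(t),t)\|\le Le\le L r\le\alpha/4$, using $r\le(4\kappa)^{-1}$. A Neumann-series argument then gives $\sigma_{\min}(J(y,t))\ge 3\alpha/4$, so $\|J(y,t)^{-1}\|\le 4/(3\alpha)$. Write $x'=-J_0^{-1}b_0$ and $v=-J_1^{-1}b_1$, with subscript $0$ at $x(t)$ and $1$ at $y$. Then
\[
 v-x'=-J_1^{-1}\bigl[(b_1-b_0)+(J_1-J_0)\,J_0^{-1}\cdot(-b_0)\bigr]
 =-J_1^{-1}\bigl[(b_1-b_0)-(J_1-J_0)x'\bigr].
\]
Both differences are bounded by $Le$, so $\|v-x'\|\le\frac{4}{3\alpha}Le(1+V)\le 2\kappa(1+V)e=De$. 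This proves~\eqref{eq:velocity-error}.

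\emph{Prediction error.} Use the Taylor expansion $x(t+h)=x(t)+hx'(t)+R$ with $\|R\|\le Ch^2/2$. Then
\[
 y+hv-x(t+h)=(y-x(t))+h(v-x'(t))-R,
\]
and the triangle inequality gives~\eqref{eq:prediction-error}.

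\emph{Newton and residual.} Use $F(x(t),t)=0$ and write
\[
 \mathcal N_t(y)-x(t)=J_1^{-1}\bigl[J_1(y-x)-F(y,t)+F(x,t)\bigr].
\]
The integral form of Taylor's theorem bounds the bracket by $Le^2/2$. With $\|J_1^{-1}\|\le 4/(3\alpha)$ this gives $\tfrac23\kappa e^2\le\kappa e^2$. For the residual, $F(y,t)=\int_0^1J(x+s(y-x),t)\,ds\,(y-x)$. The upper bound $\Lambda e$ is immediate. For the lower bound, the averaged Jacobian differs from $J_0$ by at most $Le/2\le\alpha/8$, so its smallest singular value is at least $7\alpha/8\ge\alpha/2$.

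\emph{Main obstacle.} No step is deep. The main risk is bookkeeping: confirming that the segments stay in the tube, and that the perturbation bound $Lr\le\alpha/4$ is strong enough to absorb the constants into $D$ and $\kappa$.
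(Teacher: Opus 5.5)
Your proof is correct and follows the paper's argument step for step: you differentiate the curve identity twice, perturb $J(y,t)$ using $Le\le\alpha/4$, subtract the two velocity equations, apply Taylor's formula for the predictor, and use the Newton integral identity. The only differences are the sharper inverse bound $\|J(y,t)^{-1}\|\le 4/(3\alpha)$ where the paper uses $2/\alpha$, and a harmless sign slip: the identity should read $v-x'=-J_1^{-1}\bigl[(b_1-b_0)+(J_1-J_0)x'\bigr]$, which does not affect the norm estimate.
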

\begin{proof}
Differentiate $F(x(t),t)=0$ twice:
$Jx'+b=0$ and $Jx''+D^2F[(x',1),(x',1)]=0$.
These give the first bounds. For $e\le r$, the coordinate segment
from $x(t)$ to $y$ stays in the radius-$\rho$ tube. The Hessian
bound gives
\[
 \|J(y,t)-J(x(t),t)\|\le Le,\qquad
 \|b(y,t)-b(x(t),t)\|\le Le.
\]
Thus $\sigma_{\min}(J(y,t))\ge\alpha-Le\ge3\alpha/4$ and
$\|J(y,t)^{-1}\|\le2/\alpha$.
Subtracting the two velocity equations gives
\[
 J(y,t)(v-x')=[J(x(t),t)-J(y,t)]x'+b(x(t),t)-b(y,t),
\]
which proves~\eqref{eq:velocity-error}; Taylor's formula gives
\eqref{eq:prediction-error}. With $x=x(t)$, the Newton error is
\[
 J(y,t)^{-1}\int_0^1[J(y,t)-J(x+s(y-x),t)](y-x)\,ds.
\]
Its norm is at most $Le^2/\alpha$. Expansion at $x$ gives the
lower residual bound $\alpha e-Le^2/2\ge\alpha e/2$;
integration of $J$ gives the upper bound.
\end{proof}

\begin{lemma}[Uniform chart margins]\label{lem:charts}
The compact zero graph has a finite cover by time intervals $I_j$
and coordinate charts $\psi_j:U_j\to\R^n$ with constants
$Q\ge1$, $\rho>0$, and $h_{\rm ch}>0$ as follows.
Every interval of length at most $h_{\rm ch}$ is contained in
some $I_j$. For $t\in I_j$, the closed coordinate ball of radius
$\rho$ about $\psi_j(x(t))$ lies in $\psi_j(U_j)$, and on the
same time interval $B_M(x(t),\rho/Q)\subset U_j$. On the used
neighborhoods
\begin{equation}\label{eq:chart-distances}
 Q^{-1}\dist(p,q)\le\|\psi_j(p)-\psi_j(q)\|
                         \le Q\dist(p,q).
\end{equation}
The constants in~\eqref{eq:bounds} can be chosen uniformly in
these charts.
\end{lemma}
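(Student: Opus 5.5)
The plan is a standard compactness and Lebesgue-number argument on the compact graph $\Gamma=\{(x(t),t):0\le t\le1\}$, followed by shrinking radii. The setting is a closed manifold with a $G$-invariant metric, and the graph is compact because $x$ is continuous on $[0,1]$.

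\emph{Step 1: local charts with bi-Lipschitz bounds.} For each $s\in[0,1]$ I choose a chart $\psi_s:U_s\to\R^n$ around $x(s)$. Shrinking it if needed, I assume $\psi_s(U_s)$ contains a closed coordinate ball of radius $2\rho_s$ about $\psi_s(x(s))$. I also assume that $\psi_s$ and its inverse have differentials bounded by some $Q_s$ on a compact neighbourhood $W_s$ of that ball, with metric tensor comparable to the Euclidean one there. Comparability gives \eqref{eq:chart-distances} with constant $Q_s$ for points of a geodesically convex ball inside $W_s$. To secure this, I would shrink further so that the relevant points lie in a convex normal ball, or else compare geodesic length with coordinate length of segments. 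By continuity of $x$ there is an open interval $I_s\ni s$ on which $\psi_s(x(t))$ stays within $\rho_s$ of $\psi_s(x(s))$. Hence the closed coordinate ball of radius $\rho_s$ about $\psi_s(x(t))$ stays in $\psi_s(U_s)$. Similarly, for a smaller radius $\rho_s/Q_s$, the metric ball $B_M(x(t),\rho_s/Q_s)$ lies in $U_s$: its points are within $\rho_s$ in coordinates by the bi-Lipschitz bound, provided I also keep $\rho_s$ below the convexity radius.

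\emph{Step 2: finite subcover and uniformization.} Compactness of $[0,1]$ gives finitely many $I_{s_j}$ covering it. The Lebesgue number lemma gives $h_{\rm ch}>0$ such that every interval of length at most $h_{\rm ch}$ lies in some $I_j$. I then set $Q=\max_j Q_{s_j}\ge1$ and $\rho=\min_j\rho_{s_j}$. Shrinking $\rho$ preserves both ball inclusions, and increasing $Q$ preserves the inequality \eqref{eq:chart-distances} and shrinks $\rho/Q$. I need to check that the inclusion $B_M(x(t),\rho/Q)\subset U_j$ still holds with the larger $Q$, which it does because the ball only shrinks.

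\emph{Step 3: uniform constants in \eqref{eq:bounds}.} In each chart, $F\circ(\psi_j^{-1}\times\mathrm{id})$ is $C^2$ on the compact set $\bigcup_{t\in\bar I_j}\bar B(\psi_j(x(t)),\rho)\times\{t\}$. To keep closures inside chart domains, I would take the $I_j$ with closures inside the original intervals. On this set, $B$, $L$ and $\Lambda$ are finite maxima. The value $\alpha$ is the minimum over $t\in\bar I_j$ of $\sigma_{\min}(J)$ at the graph. It is positive because $J$ is invertible along the graph, and invertibility in one chart implies it in every chart since the differential of $\psi_j$ is an isomorphism. Taking the max, respectively min, over the finitely many $j$ gives the uniform constants.

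The main obstacle is Step 1: making \eqref{eq:chart-distances} hold for the manifold distance and not merely the coordinate one. Global distance can be shorter than in-chart paths, so I would restrict the "used neighborhoods" to geodesically convex balls. Below the injectivity radius, every minimizing geodesic between two such points stays inside the chart, and the metric comparison then gives both inequalities.
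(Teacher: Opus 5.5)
Your proposal is correct and follows essentially the same route as the paper. Both use locally bi-Lipschitz charts around graph points with time intervals obtained from continuity, a finite subcover giving a common $Q$ and $\rho$ (shrinking $\rho$ preserves both inclusions), a Lebesgue number for $h_{\rm ch}$, and compactness with invertibility of $D_xF$ on the graph for the uniform bounds in~\eqref{eq:bounds}; your extra care comparing intrinsic distance with in-chart path length, via convex balls or a first-exit argument, only makes explicit what the paper compresses into ``smooth charts are locally bi-Lipschitz.''
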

\begin{proof}
Around each point of the graph choose a smaller chart neighborhood
compactly contained in a larger one, and a time interval whose
closed graph segment lies in the smaller neighborhood. Smooth
charts are locally bi-Lipschitz; shrink these neighborhoods so that
both inequalities hold there. A finite subcover gives a common
$Q$ and positive coordinate and intrinsic margins. Decreasing
$\rho$ secures both ball inclusions.
Choose $h_{\rm ch}$ smaller than a Lebesgue number of the time
cover. Compactness, continuity of the derivatives, and invertibility
of $D_xF$ on the graph give uniform bounds on the resulting tubes.
\end{proof}

\begin{proof}[Proof of Theorem~\ref{thm:numerical}]
First use a fixed chart and take $h_0=h_*$, $\eta_0=\eta$.
An error $e\le r/8$ gives predictor error at most
$2e+r/4\le r/2$. Correction gives error at most
$\kappa r^2/4\le r/16$, so every step stays in the tracking tube.
For the sharper estimate put $q=Ch^2/2\le r/4$.
An error at most $q$ gives predictor error at most $3q$ and
corrected error at most $9\kappa q^2\le q$, since
$9\kappa q\le9/16$. Starting at zero error, induction yields
\begin{equation}\label{eq:fourth-order}
 e_k\le9\kappa q^2=\frac{9}{4}\kappa C^2h^4\quad(k\ge1).
\end{equation}
The error left by each preceding correction is included in this
induction. At the target, $e_{j+1}\le\kappa e_j^2$ and
$\kappa e_0\le\kappa\eta\le1/32$, whence
$e_j\le\eta\,2^{-(2^j-1)}$. Taking $C_1=\Lambda\eta$
proves~\eqref{eq:main-count}. Residual $\delta$ implies coordinate
error at most $2\delta/\alpha$.

For a manifold, use Lemma~\ref{lem:charts}. At each step choose a
chart containing its whole graph segment and use this chart for both
prediction and correction. Set
\begin{equation}\label{eq:chart-step}
 h_0=\min\{h_{\rm ch},1,D^{-1},\sqrt{r/(2CQ^2)}\},
 \qquad\eta_0=r/(8Q^3).
\end{equation}
Write $E_k=\dist(y_k,x(t_k))$ and
$e_k=\|\psi_j(y_k)-\psi_j(x(t_k))\|$ in the chart used for that
step. Equation~\eqref{eq:chart-distances} gives $e_k\le QE_k$
and $E_{k+1}\le Qe_{k+1}$, including when the next step uses a
different chart. Intrinsic error $\eta_0$ becomes coordinate error at most
$r/(8Q^2)$; prediction and correction bound it by
$r/(2Q^2)$ and $r/(16Q^4)$, respectively. Returning to intrinsic
distance preserves $\eta_0$. Now $q=Ch^2/2\le r/(4Q^2)$.
Intrinsic error at most $q/Q$ gives corrected intrinsic error at
most $9Q\kappa q^2\le q/Q$. Induction gives
$C_0=(9/4)Q\kappa C^2$. At the target the coordinate error is
at most $r/(8Q^2)\le\eta$, so the same final Newton bound applies.
\end{proof}

If a complete prediction--correction step costs $\mathcal W$
and an endpoint Newton iteration costs at most the same amount,
the total arithmetic work is
\begin{equation}\label{eq:work}
 O\bigl((K+m+1)\mathcal W\bigr).
\end{equation}
This separates the continuation count from the representation of the map.

\subsection{Local steps and inexact correction}
The global bounds above can be replaced by local tests as the
computation advances. We also allow errors in function evaluation
and in solving the linear systems. At $(y,t_0)$ let
$A=J(y,t_0)$, $\beta=\|A^{-1}\|$, $v=-A^{-1}b(y,t_0)$,
$d_0=\|A^{-1}F(y,t_0)\|$, and $W=\sqrt{1+\|v\|^2}$.
Suppose $\|D^2F\|\le L$ on the predictor tube
\[
 \{(y+sv+u,t_0+s):0\le s\le h,\ \|u\|\le R\},
 \qquad t_0+h\le1.
\]
If
\begin{equation}\label{eq:tube-test}
 \beta L(R+hW)\le\tfrac12,\qquad
 d_0+\tfrac12\beta Lh^2W^2\le R/2,
\end{equation}
each slice contains exactly one zero in its radius-$R$ ball,
and these zeros form a regular graph. Indeed,
$T_s(u)=u-A^{-1}F(y+sv+u,t_0+s)$ has derivative norm at most
$1/2$, while $\|T_s(0)\|\le R/2$ by Taylor expansion and
$Av+b=0$. The contraction theorem and the implicit function
theorem prove the claim. If the selected initial zero lies in
the first ball, uniqueness identifies this graph with its continuation.

Use separate error bounds $\tau_F,\tau_J,\tau_b$ for evaluating
$F,J,b$, in the fixed coordinate and target norms. Let $\tau_p$
and $\tau_c$ bound the residuals in the predictor and corrector
linear solves, respectively. If $\tau_J\le\alpha/4$, then
$\|\widetilde J^{-1}\|\le2/\alpha$ throughout the radius-$r$ tube.
Subtracting the predictor equations, with $v=-J^{-1}b$, gives
\begin{equation}\label{eq:inexact-velocity}
 \|\widetilde v-v\|\le
 \frac2\alpha(\tau_b+\tau_p+\tau_J\|v\|).
\end{equation}
For correction, put $\zeta=\tau_J$ and $\tau=\tau_F+\tau_c$.
The Newton integral identity gives, for predictor error $e\le r$,
\begin{equation}\label{eq:inexact}
 e_{\rm new}\le\kappa e^2+2\zeta e/\alpha+2\tau/\alpha.
\end{equation}
These tolerances enter with different factors; they need not be
equal. For example, use $h\le\min\{1,D^{-1},\sqrt{r/(4C)}\}$,
velocity error at most $\xi$ with $h\xi\le r/8$,
$\zeta\le\alpha/32$, and $\tau\le\alpha r/64$.
Starting with $e\le r/8$, prediction gives error at most $r/2$;
the three terms in~\eqref{eq:inexact} are then at most
$r/16,r/32,r/32$, preserving $r/8$ after correction.
At $t=1$ the valid residual test is
$\|\widetilde F(y,1)\|+\tau_F\le\delta$.
Quadratic refinement requires decreasing evaluation errors:
$\zeta=O(e)$ and $\tau=O(e^2)$ suffice.

For $C^1$ maps a sufficiently small step still works, but quantitative
bounds require moduli of continuity. In a fixed tube let
$\omega_J,\omega_b$ be spatial moduli for $J,b$, let
$V=\max\|x'\|$, and set
$\nu(s)=2(\omega_b(s)+V\omega_J(s))/\alpha$.
Choose $r$ with $\omega_J(r)\le\alpha/8$, and let $\omega_v$
be a modulus for $x'$. For $e\le r/8$ the predictor error is at
most $e+h\nu(e)+h\omega_v(h)$. Choose $h$ so that the last two
terms are at most $r/8$ and $r/4$. The predictor error is then
at most $r/2$, and Newton's integral identity gives
$e_{\rm new}\le(2/\alpha)\omega_J(e)e\le e/4\le r/8$.
This proves finite tracking and convergent final refinement.
Finite charts with smaller radii give the manifold statement.

\subsection{Residual accuracy and evaluation cost}
An approximate answer should have an error that can be checked
from the equations themselves. This leads to a common accuracy
criterion for the different applications.

An $\varepsilon$-solution of the Borsuk--Ulam equation $f(x)=0$
means a point $y$ with $\|f(y)\|\le\varepsilon$.
The norm and normalization of $f$ are fixed as part of the problem.
This accuracy is meaningful even when an approximate zero need not
be close to a uniquely specified exact zero.

\begin{prop}[General residual and work bound]\label{prop:oracle-work}
Under the hypotheses of Theorem~\ref{thm:numerical}, start at the
exact model zero and put $f=F(\cdot,1)$. With exact evaluations
and $h=1/K\le h_0$, one prediction and one correction per step give
\begin{equation}\label{eq:general-residual-work}
 \|f(y_K)\|\le R_0h^4,\qquad
 T(h)=O(\mathcal W/h),\qquad R_0=\Lambda Q C_0.
\end{equation}
Thus an $\varepsilon$-solution is obtained with
\begin{equation}\label{eq:general-epsilon-work}
 K=\left\lceil\max\{h_0^{-1},(R_0/\varepsilon)^{1/4}\}\right\rceil,
 \qquad T(\varepsilon)=O(\mathcal W K).
\end{equation}

For inexact evaluation, fix the units of the coordinate and target
norms and let $\mathcal W(\tau)$ bound the cost of a complete step
with $\tau_F,\tau_J,\tau_b,\tau_p,\tau_c\le\tau$.
There are constants
$\widehat h_0,\widehat R_0>0$, depending only on the tracking
bounds, such that the same conclusion holds with
\begin{equation}\label{eq:inexact-oracle-work}
 K=\left\lceil\max\{\widehat h_0^{-1},
                 (\widehat R_0/\varepsilon)^{1/4}\}\right\rceil,
 \qquad T(\varepsilon)=O\bigl(K\mathcal W(K^{-4})\bigr).
\end{equation}
For a family with uniform tracking bounds, and a cost bound
nonincreasing in $\tau$, this is
\begin{equation}\label{eq:oracle-asymptotic}
 T(\varepsilon)=O\bigl(\mathcal W(c\varepsilon)
                           \varepsilon^{-1/4}\bigr)
 \quad(\varepsilon\to0)
\end{equation}
for some uniform $c>0$.
\end{prop}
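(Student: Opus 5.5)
The plan is to combine the fourth-order tracking estimate of Theorem~\ref{thm:numerical} with the residual bound~\eqref{eq:newton-error} of Lemma~\ref{lem:estimates}. For the exact-arithmetic part, Theorem~\ref{thm:numerical} with $h=1/K\le h_0$ gives $E_K=\dist(y_K,x(1))\le C_0h^4$. In the chart used at the last step, the coordinate error is at most $QE_K\le QC_0h^4$. This is at most $r$, since $QC_0h^4$ is dominated by the tube-preserving bound $\eta_0$ in the proof. The upper residual bound $\|F(y,t)\|\le\Lambda e$ then gives $\|f(y_K)\|\le\Lambda QC_0h^4=R_0h^4$. The work count is $K$ complete steps of cost $\mathcal W$, so $T(h)=O(\mathcal W/h)$. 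Choosing $K$ as in~\eqref{eq:general-epsilon-work} makes $h\le h_0$ and $R_0h^4\le\varepsilon$, which proves~\eqref{eq:general-epsilon-work}.

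For inexact evaluation, I will redo the induction in the proof of Theorem~\ref{thm:numerical}, using~\eqref{eq:inexact-velocity} and~\eqref{eq:inexact} in place of the exact predictor and Newton bounds. Put $q=Ch^2/2$, and take all tolerances $\le\tau=h^4$. By~\eqref{eq:inexact-velocity}, the velocity error is at most $O(e+\tau)$, so the predictor error is at most $(1+O(h))e+q+O(h\tau)$. By~\eqref{eq:inexact}, the corrected error is at most $\kappa e_p^2+(2\tau/\alpha)e_p+4\tau/\alpha$. Starting from $e\le c_1h^4$, the predictor error is $O(h^2)$. The quadratic term is then $O(h^4)$, and the two tolerance terms are $O(h^6)$ and $O(h^4)$. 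So with constants depending only on $\alpha,L,B,\Lambda,Q,\rho$, the bound $e_k\le c_1h^4$ is preserved for $h\le\widehat h_0$. Here $\widehat h_0$ is chosen so that the stated smallness requirements hold ($\zeta\le\alpha/32$, $\tau\le\alpha r/64$, $h\xi\le r/8$), and the chart changes are handled with the factor $Q$ exactly as before.

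The final residual is computed exactly, i.e.\ $\|f(y_K)\|\le\Lambda Qc_1h^4=:\widehat R_0h^4$. This gives~\eqref{eq:inexact-oracle-work} with per-step cost $\mathcal W(K^{-4})$. For a uniform family the constants are uniform. With $K\sim(\widehat R_0/\varepsilon)^{1/4}$ we get $K^{-4}\ge c\varepsilon$ for some uniform $c$, and since $\mathcal W$ is nonincreasing in $\tau$, $\mathcal W(K^{-4})\le\mathcal W(c\varepsilon)$. This yields~\eqref{eq:oracle-asymptotic}.

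The main obstacle is the inexact induction. The error must stay of order $h^4$ rather than degrade to order $\tau/h$ through accumulation over $K$ steps. The point to verify is that the Newton correction resets the error each step: the previous error enters only through the predictor, then quadratically. So the absolute tolerance $\tau=h^4$ contributes $O(\tau)$ per step, not a sum over steps. I would first check that the constant $c_1$ can be taken as roughly $2\kappa(2q)^2h^{-4}+4/\alpha$, and that $\kappa(3q)^2$ together with the tolerance terms stays below $c_1h^4$ once $h$ is small.
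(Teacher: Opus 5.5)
Your proposal is correct and follows the paper's proof essentially step for step. The exact residual comes from $\|F\|\le\Lambda e$ applied to the coordinate error $QC_0h^4$. The inexact case is the same induction with $\tau=h^4$: the Newton correction resets the error, so the predictor error is $O(h^2)$ and the corrected error is again $O(h^4)$. (The paper bounds the predictor error by $Ph^2$ with $P=C/2+1$ and takes $A=Q(\kappa P^2+2P/\alpha+4/\alpha)$, $\widehat R_0=\Lambda QA$.)

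There is one small slip in your final check. Starting from $e\le c_1h^4$, the predictor error is $q+O(h^4)\le 2q$, so the quantity to compare with $c_1h^4$ is $\kappa(2q)^2$ plus the tolerance terms. Using $\kappa(3q)^2=9\kappa q^2$ instead would exceed your $c_1h^4=8\kappa q^2+4h^4/\alpha$ by a term of order $h^4$, which no smallness of $h$ can absorb.
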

\begin{proof}
The coordinate endpoint error is at most $QC_0h^4$.
Integrating $D_xF$ along the coordinate segment to the zero gives
the first assertion; multiplying the step cost by $K$ proves
\eqref{eq:general-epsilon-work}.

For the inexact assertion take $\tau=h^4$. On a smaller tracking
tube, perturbed Jacobians have uniformly bounded inverses.
Equation~\eqref{eq:inexact-velocity}, with
$\|v\|\le V+Dr$, bounds the velocity error by $C_vh^4$ for a
fixed $C_v$.
Put $P=C/2+1$ and
$A=Q(\kappa P^2+2P/\alpha+4/\alpha)$.
If the intrinsic point error is at most $Ah^4$, its predictor
error in the current chart is at most
\[
 2QAh^4+Ch^2/2+C_vh^5\le Ph^2
\]
for sufficiently small $h$. Further decrease the step bound so
that the predictor lies in the tube and $h^4\le\alpha/4$.
The value error and corrector solve residual contribute at most
$2h^4$ together. By~\eqref{eq:inexact}, the new intrinsic error
is at most
\[
 Q\bigl(\kappa P^2h^4+2Ph^6/\alpha+4h^4/\alpha\bigr)
 \le Ah^4\qquad(h\le1).
\]
Induction from the exact initial zero proves the estimate in every
chart. Take $\widehat R_0=\Lambda QA$ and choose
$\widehat h_0\le h_0$ to satisfy the preceding restrictions.
This proves~\eqref{eq:inexact-oracle-work}.
For sufficiently small $\varepsilon$,
$K=O(\varepsilon^{-1/4})$ and $K^{-4}\ge c\varepsilon$;
monotonicity of the cost bound proves~\eqref{eq:oracle-asymptotic}.
\end{proof}

The estimate is common to the smooth Borsuk--Ulam problems considered
here, with dimension fixed. Its constants describe the selected
regular branch; $\mathcal W(\tau)$ describes how the input is
evaluated, including integration when needed. No rate depending
only on $\varepsilon$ follows from the topological existence
theorem. In particular, the $C^1$ case above requires moduli of
continuity, and a fold requires the augmented system below.
For a computed value with certified error $\tau$, the stopping
test is $\|\widetilde f(y)\|+\tau\le\varepsilon$.
If only the endpoint requires higher accuracy, final Newton
refinement can be used instead of reducing all continuation steps;
its exact-arithmetic count is~\eqref{eq:main-count}.
In Gaussian applications, $\mathcal W(\tau)$ includes evaluation
of $\Phi$ and $\phi$ to the requested tolerances. Counting these
functions at unit cost, as in Section~\ref{sec:complexity}, is a
separate real-arithmetic model, not a bit-complexity assertion.

\subsection{Folds}
The extra equation below controls progress along the solution curve
without requiring the homotopy parameter to increase.

At a regular zero write $A=DF$ and choose the oriented unit vector
$q\in\ker A$. Pseudo-arclength prediction $z_p=z+hq$ uses the
corrector
\begin{equation}\label{eq:arclength}
 F(w)=0,\qquad \langle q,w-z_p\rangle=0.
\end{equation}
Its derivative at the current zero is
$\mathcal A=(A^T,q)^T$, which is invertible since
$\ker A=\R q$. In orthonormal coordinates its singular values
are those of $A$ together with $1$.

For a compact $C^2$ regular arc, sufficiently small steps therefore
follow it through folds. Here is the needed finite-step justification.
In arclength coordinates, $Az''+D^2F[z',z']=0$ and
$\langle z',z''\rangle=0$, so $\|z''\|\le L/a$, where
$a$ is a lower bound for the smallest positive singular value of
$A$. The predictor error is $O(h^2)$; the correction plane meets
the same local arc at arclength advance $h+O(h^2)$ because its
derivative along that arc is $1+O(h)$. Compactness gives uniform
inverse bounds, chart margins, and Newton neighborhoods. Taking
the point error $O(h^2)$ and tangent error $O(h)$ preserves this
advance. Such tolerances are attainable by Newton correction and
continuity of the kernel direction, with orientation chosen by the
preceding tangent. Every sufficiently small accepted step advances
at least $h/2$. An arc of length $\ell$ thus requires
$O(1+\ell/h)$ steps to a target neighborhood, followed by Newton
correction on $t=1$. These constants concern the augmented system;
the monotone bounds above are not applied at a fold.

\section{Ham-sandwich and orthogonal partitions}\label{sec:partitions}
Each application requires explicit balance equations and a model
whose solution is known. We construct both and give geometric
criteria for applying the numerical estimates.

A probability measure is \emph{hyperplane-null} if every affine
hyperplane has measure zero. A positive Schwartz density is a
strictly positive smooth function of integral one, all of whose
derivatives decrease faster than any inverse power at infinity.

\subsection{Ham-sandwich cuts}
We justify the spherical description from the introduction and
identify the local geometry that makes a bisector regular.

Use $S^d=\{(a,w)\in\R\times\R^d:a^2+\|w\|^2=1\}$.
For $d$ hyperplane-null probability measures on $\R^d$, set
\begin{equation}\label{eq:ham-map}
 f_i(a,w)=\mu_i\{x:w\cdot x\ge a\}-\tfrac12,
 \qquad (a,w)\in S^d,\quad w\ne0.
\end{equation}
At the poles set $f_i(1,0)=-1/2$ and $f_i(-1,0)=1/2$.
Dominated convergence at finite hyperplanes and tightness at the
poles prove continuity. Boundary nullity gives oddness, and zeros
are exactly the common bisecting hyperplanes, modulo orientation.
For $w\ne0$, unit-normal coordinates are
$(a/\|w\|,w/\|w\|)$. Below the offset in these coordinates is
again denoted by $a$.

Take model measures $\nu_i=\mathcal N(c_i,I_d)$, where
$c_1=0$ and $c_i=e_{i-1}$ for $2\le i\le d$. Their map is
\begin{equation}\label{eq:ham-model}
 g_i(a,w)=\Phi\bigl((w\cdot c_i-a)/\|w\|\bigr)-\tfrac12,
\end{equation}
where $\Phi$ and $\phi$ denote the standard normal distribution
and density. A Gaussian is bisected precisely by hyperplanes
through its center. These centers span $x_d=0$, so the only zeros
are $\pm(0,e_d)$. At $(0,e_d)$,
\[
 Dg_i[\delta a,\delta w]=\phi(0)(c_i\cdot\delta w-\delta a),
 \qquad \delta w_d=0.
\]
The first row forces $\delta a=0$ and the remaining rows force
$\delta w_1=\cdots=\delta w_{d-1}=0$. The model is regular,
also for $d=1$.

For a positive Schwartz density $\rho_i$, the map is smooth.
Away from the poles this follows by differentiating under the
integral; near the poles rapid decay makes the tails and all
their derivatives tend to zero faster than any power of $\|w\|$.
In unit-normal coordinates $H=\{x:u\cdot x=a\}$, put
\[
 m_i(H)=\int_H\rho_i\,dS>0,\qquad
 c_i(H)=m_i(H)^{-1}\int_Hx\rho_i(x)\,dS.
\]
Writing $x=z+su$, $z\perp u$, and differentiating the moving
lower endpoint of the half-line integral gives
\begin{equation}\label{eq:ham-derivative}
 Df_i[\delta a,\delta u]
 =m_i(H)(c_i(H)\cdot\delta u-\delta a),\qquad\delta u\perp u.
\end{equation}
Rapid decay justifies differentiation. Since all centroids belong
to $H$, a kernel vector satisfies
$(c_i-c_1)\cdot\delta u=0$ for $i\ge2$ and
$\delta a=c_1\cdot\delta u$. Consequently $Df$ is invertible
exactly when the $d$ section centroids are affinely independent.

\begin{lemma}[Section-centroid criterion]\label{lem:centroid}
For positive Schwartz densities, a common bisector is a regular
zero precisely when its $d$ section centroids are affinely independent.
In unit-normal coordinates its Jacobian is
$J=\diag(m_i)C_H$, where $C_H$ has rows $(-1,c_i^TE)$ and
$E$ is an orthonormal basis matrix for $u^\perp$. In particular,
$\sigma_{\min}(J)\ge\min_i m_i\,\sigma_{\min}(C_H)$.
\end{lemma}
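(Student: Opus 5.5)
The plan is to work in unit-normal coordinates near the bisector and derive everything from the derivative formula~\eqref{eq:ham-derivative}. Near a zero with $w\ne0$, the map $(a,w)\mapsto(a/\|w\|,w/\|w\|)$ is a local diffeomorphism from $S^d$ onto $\R\times S^{d-1}$. Regularity of the zero is therefore unaffected by this change of coordinates. The tangent space of $\R\times S^{d-1}$ at $(a,u)$ is $\R\times u^\perp$. I parametrize it by $(\delta a,E\eta)$ with $\eta\in\R^{d-1}$, where $E$ is an orthonormal basis matrix for $u^\perp$. In orthonormal tangent coordinates $(\delta a,\eta)\in\R^d$, formula~\eqref{eq:ham-derivative} gives the $i$th row of $J$ as
\[
 m_i(H)\bigl(-1,\ c_i(H)^TE\bigr).
\]
This is exactly $J=\diag(m_i)C_H$. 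I will briefly recheck the derivation of~\eqref{eq:ham-derivative}, since it underlies everything else:
\begin{itemize}
\item Write $f_i+\tfrac12=\int_{u^\perp}\int_{a-u\cdot z}^\infty$ in a frame moving with $u$.
\item Differentiate the moving lower endpoint.
\item Justify differentiation under the integral using Schwartz decay.
\end{itemize}

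Next comes invertibility. Since every $m_i(H)>0$ for positive densities, $J$ is invertible iff $C_H$ is. A kernel vector $(\delta a,\eta)$ of $C_H$ satisfies $c_i\cdot E\eta=\delta a$ for all $i$. Subtracting the first equation gives $(c_i-c_1)\cdot E\eta=0$ for $i\ge2$.

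The centroids lie in $H$, so the differences $c_i-c_1$ lie in $u^\perp$. Affine independence of the $d$ centroids means these $d-1$ differences span $u^\perp$. In that case $E\eta=0$, so $\eta=0$ and $\delta a=c_1\cdot E\eta=0$.

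Conversely, if the centroids are affinely dependent, there is a nonzero $\eta$ with $E\eta$ orthogonal to all differences. Setting $\delta a=c_1\cdot E\eta$ then gives a nonzero kernel vector. This proves the criterion. Equivalently, $C_H$ is the matrix of the affine coordinates of the centroids, augmented by a column of $-1$'s.

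Finally, the singular value bound follows from $\|Jx\|=\|\diag(m_i)C_Hx\|\ge\min_i m_i\,\|C_Hx\|\ge\min_i m_i\,\sigma_{\min}(C_H)\|x\|$. This holds because a diagonal matrix with positive entries contracts by at most its smallest entry.

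The only subtle point is the claim that the intrinsic Jacobian is computed in orthonormal coordinates. The norms in~\eqref{eq:bounds} are coordinate dependent, so I will note that $(\delta a,E\eta)$ is an isometric parametrization of the tangent space with the product metric. The formula and the bound are then stated in those coordinates. The change from the $(a,w)$ coordinates on $S^d$ affects only the chart constant $Q$ of Lemma~\ref{lem:charts}. The main obstacle, if any, is this bookkeeping about coordinates rather than the linear algebra.
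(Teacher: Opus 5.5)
Your proposal is correct and follows the paper's argument: you read the factorization $J=\diag(m_i)C_H$ off \eqref{eq:ham-derivative} in the orthonormal tangent coordinates $(\delta a,E\eta)$, use $m_i>0$ to reduce invertibility to the same kernel computation ($(c_i-c_1)\cdot E\eta=0$ and $\delta a=c_1\cdot E\eta$), and get the singular-value bound from the same diagonal-scaling estimate applied to $z=C_Hx$. One cosmetic point: in your recheck of \eqref{eq:ham-derivative}, the lower limit $a-u\cdot z$ is correct to first order in $\delta u\perp u_0$ in a frame fixed at the base normal $u_0$, not in a frame moving with $u$, where $u\cdot z=0$.
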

\begin{proof}
The derivative and kernel calculation above give the factorization
and equivalence. The singular-value bound follows by applying
$\|\diag(m_i)z\|\ge(\min_i m_i)\|z\|$ to $z=C_Hv$.
\end{proof}

\begin{cor}\label{cor:ham}
Every $d$ hyperplane-null probability measures in $\R^d$ have a
common bisecting hyperplane. For positive Schwartz densities,
the Gaussian model above has one regular antipodal zero orbit.
A homotopy satisfying Theorem~\ref{thm:continuation} reaches a
common bisector. For the mixture homotopy
$\mu_{i,t}=(1-t)\nu_i+t\mu_i$, the numerical bounds of
Theorem~\ref{thm:numerical} apply whenever the section centroids
remain affinely independent along the selected arc.
\end{cor}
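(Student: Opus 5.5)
The corollary has four assertions, and I will prove them in order, mostly by assembling earlier results.

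\emph{Existence for hyperplane-null measures.} The map $f$ in \eqref{eq:ham-map} is continuous and odd on $S^d$, with the antipodal free action of $G=\mathbb Z_2$ and the sign representation $V=\R^d$. The Gaussian model $g$ in \eqref{eq:ham-model} is smooth away from the poles. Near the poles it is smooth as well, since Gaussian tails decay rapidly, just as for Schwartz densities. It has the single zero orbit $\pm(0,e_d)$, and this zero is regular by the computation of $Dg_i$ given above. Proposition~\ref{prop:continuous}, applied to the straight-line homotopy \eqref{eq:homotopy}, then gives a zero of $f$. Zeros of $f$ are exactly the common bisectors, because the poles are never zeros. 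This settles the first assertion.

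\emph{Regularity and continuation for Schwartz densities.} The second assertion is the model computation just cited: each Gaussian density is a positive Schwartz density, and its zero orbit is the single regular orbit $\pm(0,e_d)$. For the third assertion, Theorem~\ref{thm:continuation} applies directly, with $M=S^d$ and $n=d$. Its arc ends at a zero of $f$, which is a common bisector.

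\emph{Numerical bounds for the mixture homotopy.} For the fourth assertion, note that $\mu_{i,t}$ has density $(1-t)\varphi_{c_i}+t\rho_i$, which is again a positive Schwartz density and depends affinely on $t$. Hence
\[
 F_i(\cdot,t)=(1-t)g_i+tf_i
\]
is exactly the homotopy \eqref{eq:homotopy}, and it is smooth jointly in $(x,t)$, in particular $C^2$. Along the selected arc, each time slice is a positive Schwartz density. Lemma~\ref{lem:centroid} therefore shows that $D_xF$ is invertible at $(x(t),t)$ exactly when the section centroids of the mixtures are affinely independent. If this holds along the whole arc, $J$ is invertible along it, including both endpoints. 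Theorem~\ref{thm:continuation} then makes the arc a graph $x(t)$ over $[0,1]$, which is precisely the hypothesis of Theorem~\ref{thm:numerical}.

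\emph{Main obstacle.} The delicate point is the rank hypothesis of Theorem~\ref{thm:continuation} at interior zeros away from the selected arc, since the full zero set may contain other degenerate components. I would handle it by reading the fourth assertion only along the selected arc, and by invoking Lemma~\ref{lem:regularize} if the general third assertion needs it. A second check is that the mixture densities, with $t$ varying, remain smooth at the poles uniformly in $t$. This follows because the tail estimates are affine in $t$, so the uniform tracking constants in \eqref{eq:bounds} exist by compactness of the arc.
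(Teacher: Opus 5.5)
Your proposal is correct and follows the paper's proof. Existence comes from Proposition~\ref{prop:continuous} with the regular Gaussian model, since zeros are never poles, and the remaining claims come from applying Lemma~\ref{lem:centroid} to the positive Schwartz mixtures together with Theorems~\ref{thm:continuation} and~\ref{thm:numerical}. Your ``main obstacle'' is milder than you suggest: invertibility of $D_xF$ along the zero component through $(x_0,0)$ already makes that component a graph over $[0,1]$ by the implicit function theorem, so Lemma~\ref{lem:regularize} is unnecessary (and it would alter the mixture homotopy anyway).
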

\begin{proof}
Apply Proposition~\ref{prop:continuous} to the odd map
\eqref{eq:ham-map} and the regular model~\eqref{eq:ham-model}.
Its zero is not a pole and hence gives a finite bisector.
The smooth continuation statements follow from the derivative
criterion and Theorems~\ref{thm:continuation}--\ref{thm:numerical}.
\end{proof}

For the mixture homotopy, $\|F_t\|\le\sqrt d$.
An alternative model uses centered regular-simplex centers in
$e_d^\perp$, normalized by $\sum_i c_i=0$ and
$\sum_i c_ic_i^T=I-e_de_d^T$. Direct expansion gives
\begin{equation}\label{eq:simplex-model-condition}
 \|Dg[\delta a,\delta u]\|^2
 =\phi(0)^2(d\,\delta a^2+\|\delta u\|^2).
\end{equation}
Thus the model has smallest singular value $\phi(0)$ independently
of $d$; this is an initial bound, not a bound along the homotopy.

\paragraph{Accuracy for measures.}
For the normalized measures above, the mass error of a hyperplane is
\begin{equation}\label{eq:mass-error}
 E_\mu(H)=\max_i\left|\mu_i(H^+)-\tfrac12\right|
          =\|f_\mu(H)\|_\infty.
\end{equation}
Since hyperplanes have zero mass, the opposite half-space has
the same error. Thus the residual estimate in
Proposition~\ref{prop:oracle-work} gives an $\varepsilon$-bisector
in mass. For unnormalized finite positive measures, normalization
makes this an error of at most $\varepsilon$ times each total mass.
Here $\mathcal W(\tau)$ includes the half-space integrals and
the section integrals determining their derivatives in
\eqref{eq:ham-derivative}, to the required accuracy.

If the input measures are approximated by probability measures
$\widetilde\mu_i$, the appropriate error for this problem is
their discrepancy on half-spaces:
\[
 \Delta_{\mathcal H}(\mu,\widetilde\mu)
   =\max_i\sup_{H^+\ {\rm half\text{-}space}}
                |\mu_i(H^+)-\widetilde\mu_i(H^+)|.
\]
Directly from the triangle inequality,
\begin{equation}\label{eq:measure-error-budget}
 E_\mu(H)\le E_{\widetilde\mu}(H)
                +\Delta_{\mathcal H}(\mu,\widetilde\mu).
\end{equation}
Consequently discrepancy at most $\varepsilon/2$ and a certified
mass error at most $\varepsilon/2$ give an $\varepsilon$-bisector
of the original measures. For densities, the discrepancy is at most
$\tfrac12\max_i\|\rho_i-\widetilde\rho_i\|_{L^1}$: their difference
has integral zero, so its positive and negative parts have equal
integrals. Constructing the approximate measures is a separate
input cost. Discrepancy controls the final mass error; derivative
and conditioning bounds must still be checked for the homotopy
actually tracked.

\subsection{Orthogonal pairwise four-partitions}
Here the unknown is an orthogonal frame. Individual bisection
determines the offsets, leaving the pairwise balance equations.

The pairwise orthogonal partition theorem was stated by
Makeev~\cite{Makeev}; the equivariant model used here is
from~\cite{MusinOrth}.
Let $B_d$ be the signed permutation group and let $W_2^+$ be the
space of symmetric $d\times d$ matrices with zero diagonal.
Use the left actions
$P\cdot U=UP^{-1}$ on $O(d)$ and $P\cdot C=PCP^{-1}$ on
$W_2^+$. The first is free and both spaces have dimension
$n=\binom d2$. For a diagonal matrix
$A_0=\diag(\alpha_1,\ldots,\alpha_d)$ with distinct entries, set
\begin{equation}\label{eq:orth-model}
 g(U)=\off(U^TA_0U).
\end{equation}
Since signed permutations preserve diagonal matrices,
$g(UP^{-1})=\off(PU^TA_0UP^{-1})=Pg(U)P^{-1}$.
Thus the map is equivariant, and $g(U)=0$ means that the columns of
$U$ are eigenvectors of $A_0$. Its zeros are exactly the signed
coordinate eigenbases, one $B_d$-orbit. For $\Omega^T=-\Omega$,
\begin{equation}\label{eq:orth-model-derivative}
 Dg_I[\Omega]_{ij}=(\alpha_i-\alpha_j)\Omega_{ij},\quad i<j,
\end{equation}
by differentiating $e^{-s\Omega}A_0e^{s\Omega}$.
These are independent tangent coordinates with nonzero coefficients.
Equivariance gives regularity at all zeros. This is the model
of~\cite{MusinOrth}.

Let $\rho$ be a positive Schwartz probability density. For each
unit $u$ there is a unique median offset $a(u)$, since the
half-space mass decreases strictly from $1$ to $0$ with derivative
$-\int_{u\cdot x=a}\rho\,dS<0$. The implicit function theorem
gives smoothness, and $a(-u)=-a(u)$.
For $U=(u_1,\ldots,u_d)$ put
$s_i(x)=\sgn(u_i\cdot x-a(u_i))$ and
\begin{equation}\label{eq:pair-map}
 f_{ij}(U)=\int s_i(x)s_j(x)\rho(x)\,dx,\qquad i<j.
\end{equation}
Signs and permutations of the columns give the stated equivariance.
Orthogonal coordinates and translation by the offsets express the
quadrant masses as integrals over fixed orthants, proving smoothness.
The median equations give $\int s_i\rho=0$, and therefore
\begin{equation}\label{eq:quadrants}
 \mu\{s_i=\epsilon_i,s_j=\epsilon_j\}
 =\tfrac14(1+\epsilon_i\epsilon_j f_{ij}),
 \qquad\epsilon_i,\epsilon_j\in\{-1,1\}.
\end{equation}
Zeros of $f$ are precisely the pairwise four-partitions.
This asserts equality of the four parts for every pair; it does
not assert equality of all $2^d$ orthants.

For evaluating the Jacobian, let $H_i=\{u_i\cdot x=a(u_i)\}$
and let $c_i$ be its section centroid. Differentiating the median
equation gives $Da(u_i)[\delta u_i]=c_i\cdot\delta u_i$.
The jump of a sign function is $2$, so
\begin{equation}\label{eq:pair-derivative}
 \begin{split}
 Df_{ij}[\delta U]={}&2\int_{H_i}s_j(x)\,
               \delta u_i\cdot(x-c_i)\rho(x)\,dS\\
 &+2\int_{H_j}s_i(x)\,
               \delta u_j\cdot(x-c_j)\rho(x)\,dS.
 \end{split}
\end{equation}
Use local orthogonal coordinates and differentiate the integrals
on each side of the moving boundaries. Schwartz decay justifies
dominated differentiation, and $H_i\cap H_j$ has surface measure
zero in either hyperplane. The boundary terms give the displayed
formula.
Setting $\delta U=U\Omega$ gives the square Jacobian in the
$\binom d2$ independent entries of $\Omega$. Prediction and
correction can be performed in charts $\Omega\mapsto U\exp\Omega$.

\begin{cor}\label{cor:orth}
For $d\ge2$, every hyperplane-null probability measure on $\R^d$
admits $d$ mutually orthogonal affine hyperplanes such that every
pair cuts it into four equal parts. For positive Schwartz densities,
\eqref{eq:orth-model} and~\eqref{eq:pair-map} give a continuation
problem satisfying the model assumptions of
Theorem~\ref{thm:continuation}. The numerical conclusion holds
under its remaining rank assumptions and spatial invertibility
along the selected arc.
\end{cor}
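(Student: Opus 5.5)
The existence statement will follow from Proposition~\ref{prop:continuous} applied on $M=O(d)$ with $G=B_d$ and $V=W_2^+$. This mirrors the proof of Corollary~\ref{cor:ham}. The continuation and numerical statements will come from Theorems~\ref{thm:continuation}--\ref{thm:numerical}, using the derivative formula~\eqref{eq:pair-derivative}.

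First I check the model hypotheses. The action $P\cdot U=UP^{-1}$ is free, and $\dim O(d)=\dim W_2^+=\binom d2$. The model~\eqref{eq:orth-model} is smooth and equivariant. Its zero set is the single orbit $B_d\cdot I$, and~\eqref{eq:orth-model-derivative} together with equivariance shows it is regular. For a positive Schwartz density, the map~\eqref{eq:pair-map} is smooth and equivariant, as established above. Hence the homotopy $F(U,t)=(1-t)g(U)+tf(U)$ is smooth and equivariant with the required initial data. Together with the stated rank and spatial-invertibility assumptions, this gives the continuation conclusion via Theorem~\ref{thm:continuation}, and then Theorem~\ref{thm:numerical} in charts $\Omega\mapsto U\exp\Omega$. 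By~\eqref{eq:quadrants}, any zero of $f$ is a pairwise four-partition.

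For a general hyperplane-null probability measure $\mu$, the median offset $a(u)$ may fail to be unique, so $f$ need not be defined or continuous. I would handle this by approximation. Convolve $\mu$ with a small Gaussian (a positive Schwartz density) to get $\mu_k\to\mu$ weakly. Each $\mu_k$ has a zero $U_k$ of its map $f^{(k)}$, obtained from Proposition~\ref{prop:continuous} applied to the continuous homotopy from $g$; regularity is not needed for this step. By compactness of $O(d)$, pass to a subsequence with $U_k\to U$.

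The main obstacle is the offsets. For $\mu_k$ they are $a_k(u_i^{(k)})$, and I need these to be bounded, with a convergent subsequence to some $a_i$ whose hyperplane bisects $\mu$. Boundedness follows from tightness of the family $\{\mu_k\}$. Because hyperplanes are $\mu$-null, half-space masses converge under weak convergence (portmanteau, since the boundary is null). So in the limit each $H_i=\{u_i\cdot x=a_i\}$ bisects $\mu$. The same argument applies to the quadrants $\{s_i=\epsilon_i,s_j=\epsilon_j\}$, whose boundaries are contained in $H_i\cup H_j$ and are therefore $\mu$-null, so each quadrant mass converges to $1/4$. The columns of the limit $U$ are orthonormal, so the limiting hyperplanes are mutually orthogonal. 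This completes the existence claim for $d\ge2$.
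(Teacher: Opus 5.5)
Your route is the paper's: Proposition~\ref{prop:continuous} on $O(d)$ with the model \eqref{eq:orth-model} for nice densities, then approximation, compactness of $O(d)$, bounded offsets, and a limit. The model and continuation parts are fine, and so is the offset bound from tightness.

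\textbf{The gap is in the limit step.} You write that half-space and quadrant masses converge ``by portmanteau, since the boundary is null.'' The portmanteau theorem gives $\mu_k(A)\to\mu(A)$ only for a \emph{fixed} $\mu$-continuity set $A$. Here the quadrants
\[
 A_k=\{s^{(k)}_i=\epsilon_i,\ s^{(k)}_j=\epsilon_j\}
\]
are built from $U_k$ and the offsets $a^{(k)}$, so they move with $k$. What you need is $\mu_k(A_k)\to\mu(A)$. That does not follow from portmanteau alone; nullity of the \emph{limiting} hyperplanes has to be used quantitatively. This is exactly the step the paper's proof spends its effort on. One way to do it:
\begin{itemize}
\item Fix a $\mu$-continuity ball $B$. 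On $B$, the affine functions $u^{(k)}_i\cdot x-a^{(k)}_i$ converge uniformly.
\item Hence $A_k\cap B$ is eventually sandwiched between the inward and outward $\delta$-shifts of $A$, intersected with $B$. These are continuity sets, because $\mu$ is hyperplane-null.
\item Apply weak convergence to the shifted sets. Let $\delta\to0$, using $\mu(H_i\cup H_j)=0$. Then let $B$ grow, using tightness.
\end{itemize}
Alternatively, cite the extended continuous mapping theorem: $1_{A_k}(x_k)\to1_A(x)$ whenever $x_k\to x\notin H_i\cup H_j$, and together with $\mu(H_i\cup H_j)=0$ this gives $\mu_k(A_k)\to\mu(A)$. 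With either argument inserted, your proof closes.

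\textbf{A smaller point.} $\mu*\mathcal N(0,\sigma^2I)$ is a smooth positive density, but it need not be Schwartz; take $\mu$ with polynomial tails. So your maps $f^{(k)}$ are not covered by the discussion preceding the corollary. Since you only invoke Proposition~\ref{prop:continuous}, this is harmless if you check two things directly:
\begin{itemize}
\item positivity of the density makes the median offset $a(u)$ unique and continuous;
\item hyperplane-nullity makes $f^{(k)}$ continuous.
\end{itemize}
Alternatively, do as the paper does: first approximate $\mu$ by finitely supported measures and then convolve. Each $\mu_k$ is then a finite Gaussian mixture, hence a positive Schwartz density.
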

\begin{proof}
For positive Schwartz densities, Proposition~\ref{prop:continuous}
gives a zero of~\eqref{eq:pair-map}; equation~\eqref{eq:quadrants}
proves the partition assertion. For a general hyperplane-null
measure $\mu$, choose finite positive Gaussian mixtures $\mu_j$
converging weakly to $\mu$. Finitely supported approximations
followed by Gaussian convolution provide such a sequence.

Choose a ball which is a continuity set of $\mu$ and has mass
greater than $1/2$. It eventually has $\mu_j$-mass greater than
$1/2$, so all median hyperplanes of the constructed partitions
meet this ball. Their offsets are bounded. Compactness of $O(d)$
and of the offset interval gives convergent frames and offsets.
The limiting frame is orthogonal. On any fixed ball the defining
affine functions converge uniformly, so the moving quadrants lie
between small inward and outward shifts of the limiting quadrants.
Weak convergence applies to shifts chosen as continuity sets.
Let the shifts tend to zero, using nullity of the limiting
hyperplanes, and then let the ball grow, using tightness.
All quadrant masses converge to $1/4$.

The regularity and equivariance of the model were checked in
\eqref{eq:orth-model-derivative}. Applying the two continuation
theorems proves the numerical statement.
\end{proof}

For either application a regular target permits an interior
perturbation by Lemma~\ref{lem:regularize}. A prescribed linear
homotopy need not already be regular or monotone.

\section{Finite point sets}\label{sec:points}
The objective here is to turn an approximate smooth partition into
an exact statement about point counts.

Let $P_1,\ldots,P_d\subset\R^d$ be finite nonempty sets,
$n_i=|P_i|$, and $N=\sum_i n_i$. A \emph{discrete bisector}
has at most $\lfloor n_i/2\rfloor$ points of each color in either
open half-space. Points on the hyperplane are excluded from both
counts. Small-ball smoothing
and reduction to odd cardinalities are classical devices;
see~\cite[Section~3.1]{Matousek}.

\subsection{Smoothing and regular target zeros}
We identify conditions under which the smoothed problem has
isolated, stable solutions corresponding to the discrete cuts.

Replace each point by $\mathcal N(p,\sigma^2I_d)$, with equal
weights within each color. For $H=\{u\cdot x=a\}$, $\|u\|=1$,
write $s_p=u\cdot p-a$. The mass imbalance and its derivative are
\begin{align}
 f_{i,\sigma}(a,u)&=\frac1{n_i}\sum_{p\in P_i}\Phi(s_p/\sigma)-\tfrac12,
 \label{eq:atomic-gaussian}\\
 Df_{i,\sigma}[\delta a,\delta u]
 &=\frac1{n_i\sigma}\sum_{p\in P_i}\phi(s_p/\sigma)
                  (p\cdot\delta u-\delta a),\qquad\delta u\perp u.
 \label{eq:atomic-derivative}
\end{align}
These finite sums require no multidimensional integration. They
give the smooth odd target of Section~\ref{sec:partitions}.

\begin{prop}[Discrete limit]\label{prop:atomic-limit}
If $\sigma_j\to0$ and
$\max_i|f_{i,\sigma_j}(a_j,u_j)|\le\delta_j\to0$, a subsequence
of the hyperplanes converges to a discrete bisector.
\end{prop}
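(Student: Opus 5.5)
Since $\|u_j\|=1$, I will first pass to a subsequence with $u_j\to u$. The offsets are bounded, which I show as follows. If $a_j\to+\infty$ along a subsequence, then $s_p=u_j\cdot p-a_j\to-\infty$ uniformly over the finite set of points. Hence $\Phi(s_p/\sigma_j)\to0$ and $f_{i,\sigma_j}\to-1/2$, contradicting $\delta_j\to0$; the case $a_j\to-\infty$ is symmetric. So a further subsequence gives $a_j\to a$, and the candidate limit is $H=\{u\cdot x=a\}$.

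Next I fix a color $i$ and a point $p\in P_i$ lying in the open half-space $u\cdot p>a$. Then $s_p^{(j)}=u_j\cdot p-a_j\to u\cdot p-a>0$ while $\sigma_j\to0$, so $s_p^{(j)}/\sigma_j\to+\infty$ and $\Phi(s_p^{(j)}/\sigma_j)\to1$. Points in the negative open half-space likewise contribute terms tending to $0$. Points on $H$ contribute terms in $[0,1]$, which I only need to bound below by $0$. If $k_+$ denotes the number of points of $P_i$ in the positive open side, this gives
\[
 \liminf_j \frac1{n_i}\sum_{p\in P_i}\Phi\bigl(s_p^{(j)}/\sigma_j\bigr)\ \ge\ \frac{k_+}{n_i}.
\]
Combined with $f_{i,\sigma_j}\le\delta_j\to0$, this yields $k_+/n_i\le1/2$, hence $k_+\le\lfloor n_i/2\rfloor$ because $k_+$ is an integer. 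The same argument applied to $1-\Phi(s)=\Phi(-s)$ bounds the count $k_-$ in the negative open side. Since this holds for every color, $H$ is a discrete bisector.

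There is no real obstacle. The only points needing care are the boundedness of the offsets, so that the limit is a genuine hyperplane rather than a pole of the sphere, and the fact that points on $H$ may have any limiting $\Phi$-value. The inequality argument handles the latter without identifying those limits. Finiteness of each $P_i$ makes all the limits uniform.
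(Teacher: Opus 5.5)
Your proof is correct and follows the paper's argument: compactness of the unit sphere and bounded offsets give a convergent subsequence. Points strictly off the limiting plane contribute asymptotically their full weight to their side, and integrality then gives the $\lfloor n_i/2\rfloor$ bound on both sides. The only difference is that the paper bounds the offsets explicitly, $|a|\le R+\sigma\Phi^{-1}(1/2+\delta)$ for $\|p\|\le R$, and reuses this bound later, whereas you get boundedness by contradiction.
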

\begin{proof}
If $\|p\|\le R$ and $\delta<1/2$, monotonicity of $\Phi$ gives
\begin{equation}\label{eq:offset-bound}
 |a|\le R+\sigma\Phi^{-1}(1/2+\delta).
\end{equation}
For instance, when $a>R$ every summand is at most
$\Phi((R-a)/\sigma)$, while their average is at least $1/2-\delta$.
Thus offsets are bounded and a subsequence converges, using compactness
of the unit sphere. Every point strictly on one side of the limiting
plane contributes asymptotically its full weight to that side.
The total is at most $1/2$ in each color. Integrality gives the
discrete bound on both sides.
\end{proof}

An exact smooth cut need not be a discrete cut. For
$P=\{-2,0,1\}\subset\R$, at the discrete median $a=0$ the
Gaussian imbalance is
$[\Phi(-2/\sigma)-\Phi(-1/\sigma)]/3<0$.
Its strictly decreasing function of $a$ has its zero in $(-2,0)$,
leaving two original points on one side. Reconstruction is essential.

From now through Theorem~\ref{thm:explicit-rounding}, assume that
the sets are pairwise disjoint, all $n_i$ are odd, and their union
is in \emph{affine general position}: every subset of at most
$d+1$ points is affinely independent. Every discrete bisector
contains a point of each color, since an odd count cannot split
equally off the plane. General position permits at most $d$ points
on a hyperplane. Hence it contains exactly one $q_i\in P_i$,
and the remaining points of each color split equally. The set
$\mathcal B$ of unoriented discrete bisectors is therefore finite.

Parity is easily arranged: delete one point from each even-size
set. If its original size is $2k$, a bisector of the remaining
$2k-1$ points leaves at most $k-1$ on either side. Restoring the
deleted point preserves bisection of the original set. This does
not impose general position.

Compact smoothing first identifies the discrete cuts exactly.
The Gaussian analogue uses a limiting argument and gives regular
nearby cuts; the reconstruction theorems then allow a nonzero
numerical residual. Define the slab parameter
\begin{equation}\label{eq:slab-parameter}
 r_* =\min_{\substack{T\subset\bigcup_iP_i\\|T|=d+1}}
       \ \min_{\|u\|=1,\ a\in\R}\max_{p\in T}|u\cdot p-a|,
\end{equation}
with $r_*=\infty$ when $N=d$. For $N>d$ the inner minimum
is attained with bounded $a$ and is positive by general position;
the finite outer minimum is positive. Thus a closed slab of
half-width less than $r_*$ contains at most $d$ input points.

\begin{thm}[Compact smoothing]\label{thm:compact-points}
Let $\kappa$ be a smooth radial probability density supported in
the closed unit ball and positive in its interior. Replacing each
point by $\sigma^{-d}\kappa((x-p)/\sigma)$, with $0<\sigma<r_*$,
gives exactly the common bisectors $\mathcal B$, all regular.
The equality of solution sets also holds for uniform solid balls.
\end{thm}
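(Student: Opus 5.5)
The plan is a direct counting argument on the smoothed map \eqref{eq:ham-map}, followed by the derivative formula \eqref{eq:ham-derivative} at the zeros. Write $\kappa_p=\sigma^{-d}\kappa((x-p)/\sigma)$ and, for $H=\{u\cdot x=a\}$ with $\|u\|=1$, put $s_p=u\cdot p-a$. By radial symmetry and compact support, the mass $\theta(s_p)$ of $\kappa_p$ on $\{u\cdot x\ge a\}$ depends only on $s_p$. It equals $1$ for $s_p\ge\sigma$ and $0$ for $s_p\le-\sigma$. On $(-\sigma,\sigma)$ it is strictly increasing, because its derivative is the section mass of $\kappa_p$, which is positive when the plane meets the open ball. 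By symmetry, $\theta(s_p)=1/2$ exactly when $s_p=0$. The same facts hold for the uniform solid ball. Hence all measures are hyperplane-null and the color-$i$ imbalance is
\[
 f_i(a,u)=\frac1{n_i}\Bigl(\#\{p\in P_i:s_p\ge\sigma\}+\sum_{p\in P_i,\ |s_p|<\sigma}\theta(s_p)\Bigr)-\tfrac12 .
\]
The poles are not zeros, so only finite hyperplanes matter.

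\emph{Zeros are discrete bisectors.} Suppose $f(a,u)=0$. Since $n_i$ is odd, $n_i/2$ is not an integer. So for every color some $p\in P_i$ must satisfy $|s_p|<\sigma$, because otherwise the bracket is an integer. All such points lie in the closed slab $|u\cdot x-a|\le\sigma$ of half-width $\sigma<r_*$. By the defining property of $r_*$ in \eqref{eq:slab-parameter}, this slab contains at most $d$ input points. Therefore each color contributes exactly one such point $q_i$, and $\theta(s_{q_i})$ must equal $n_i/2$ minus an integer, which forces $\theta(s_{q_i})=1/2$. So $s_{q_i}=0$ and $H$ passes through $q_i$. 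Every other point of color $i$ satisfies $|s_p|\ge\sigma$ and hence lies strictly off $H$. The remaining $n_i-1$ points then split $(n_i-1)/2$ on each side, so $H\in\mathcal B$.

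\emph{Discrete bisectors are zeros.} Conversely, let $H\in\mathcal B$. By the discussion preceding the theorem, $H$ contains exactly one $q_i$ of each color, and the other points of each color split equally. If some other input point $p$ had $|s_p|\le\sigma$, the slab of half-width $\sigma$ about $H$ would contain the $d+1$ points $q_1,\dots,q_d,p$, contradicting $\sigma<r_*$. So every $p\ne q_i$ contributes $0$ or $1$, each $q_i$ contributes $1/2$, and $f(H)=0$. This proves equality of the solution sets, for both the smooth kernel and the uniform ball.

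\emph{Regularity.} This is the step I expect to need the most care, since Lemma~\ref{lem:centroid} is stated for positive Schwartz densities, whereas these densities are compactly supported and only piecewise positive. I would re-derive \eqref{eq:ham-derivative} locally near a zero $H$. Each point other than the $q_i$ has its ball at positive distance from $H$, so it is fixed on one side of every nearby hyperplane and contributes nothing to the derivative. The density $\kappa_{q_i}$ is smooth in a neighborhood of $H$, so differentiating the moving half-space integral, as in the derivation of \eqref{eq:ham-derivative}, gives $Df_i[\delta a,\delta u]=m_i(c_i\cdot\delta u-\delta a)$. Here $m_i$ is the section mass of $\kappa_{q_i}/n_i$ on $H$, which is positive because $H$ passes through the center. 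By radial symmetry the section centroid is $c_i=q_i$. General position makes $q_1,\dots,q_d$ affinely independent, so the kernel computation preceding Lemma~\ref{lem:centroid} shows that $Df$ is invertible at every zero.

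For uniform solid balls only the equality of solution sets is claimed, so the lack of smoothness at the ball boundary causes no problem there.
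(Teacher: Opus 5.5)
Your proof is correct and follows essentially the same route as the paper. Both arguments use the same steps. First, $\sigma<r_*$ means at most $d$ balls meet a plane. Odd color sizes then force exactly one cut ball per color, whose half-space mass must be exactly $1/2$, which puts its center $q_i$ on the plane. The converse uses the same slab bound, and regularity comes from the derivative rows $m_i(q_i\cdot\delta u-\delta a)$ with affinely independent $q_i$. Your local re-derivation of the derivative (other balls at positive distance from $H$, smooth $\kappa_{q_i}$ near $H$) spells out what the paper states briefly via the central marginal density $q_\kappa(0)$.
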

\begin{proof}
A hyperplane meets at most $d$ support balls. To bisect an odd
color it must cut the interior of at least one ball: otherwise
its unnormalized half-space mass is an integer, not $n_i/2$.
There is therefore exactly one such ball per color and no other
ball meets the plane. Writing $n_i=2k_i+1$, its positive mass is
$K_i+Q_i=k_i+1/2$, where $K_i$ is an integer and $0<Q_i<1$.
Thus $K_i=k_i$, $Q_i=1/2$. Radial symmetry and positivity give
the unique central median, so the plane contains the $d$ centers
$q_i$ and bisects the original points. Conversely a discrete cut
bisects these central balls and has equally many other balls on
either side, none meeting the plane.

If $q_\kappa(0)>0$ is the central density of a one-dimensional
marginal, the derivative row is
$q_\kappa(0)(q_i\cdot\delta u-\delta a)/(n_i\sigma)$.
The differences $q_i-q_1$ span $u^\perp$, proving invertibility.
The same counting argument works for uniform balls; their map is
smooth near these central cuts, though not globally $C^2$ in
every dimension.
\end{proof}

\begin{thm}[Gaussian smoothing]\label{thm:gaussian-points}
For all sufficiently small $\sigma>0$, Gaussian common bisectors
are in bijection with $\mathcal B$, modulo orientation, and all
are regular. The cut $H_\sigma$ corresponding to $H_0\in\mathcal B$
converges to $H_0$. If the other input points have distance at
least $\gamma>0$ from $H_0$, then in fixed local coordinates
\begin{equation}\label{eq:gaussian-displacement}
 \dist(H_\sigma,H_0)=O\bigl(\sigma e^{-\gamma^2/(8\sigma^2)}\bigr).
\end{equation}
When $N=d$, the cuts coincide for every bandwidth.
\end{thm}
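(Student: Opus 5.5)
Fix $H_0\in\mathcal B$ through $q_1,\dots,q_d$. In unit-normal coordinates $(a,u)$ near $H_0$, write the imbalance as $f_{i,\sigma}=\frac1{n_i}\bigl[\Phi(s_{q_i}/\sigma)-\tfrac12\bigr]+\frac1{n_i}\sum_{p\ne q_i}\bigl[\Phi(s_p/\sigma)-\tfrac12\cdot\text{(sign bookkeeping)}\bigr]$. More precisely, since the other points of color $i$ split as $k_i$ on each side, we have $\sum_{p\neq q_i}(\Phi(s_p/\sigma)-\mathbf 1[s_p>0])$ plus the exact half. So
\[
 n_if_{i,\sigma}=\Phi(s_{q_i}/\sigma)-\tfrac12+\epsilon_i(a,u,\sigma),
\]
and $\epsilon_i$ and its derivatives are $O(\sigma^{-1}e^{-\gamma^2/(2\sigma^2)}\cdot\text{poly})$ uniformly on a neighborhood of $H_0$ where all other points stay at distance $\ge\gamma/2$ from the plane.

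\textbf{Local step.} Change variables to $z_i=s_{q_i}/\sigma$. The map $G(a,u)=(s_{q_1},\dots,s_{q_d})$ has invertible derivative at $H_0$, with rows $q_i\cdot\delta u-\delta a$. Indeed the $q_i-q_1$ span $u^\perp$, as in the proof of Theorem~\ref{thm:compact-points}. So $G$ is a local diffeomorphism onto a neighborhood of $0$, and the equations become $\Phi(G_i/\sigma)-\tfrac12=-\epsilon_i$. Equivalently, $G_i=\sigma\Phi^{-1}(\tfrac12-\epsilon_i)$. This is a fixed point problem $G=\sigma\Psi(\epsilon)$ whose right side has size $O(\sigma\epsilon)$ and small Lipschitz constant. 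The contraction theorem then gives a unique solution in a fixed neighborhood $U_0$ of $H_0$. It lies within $O(\sigma e^{-\gamma^2/(8\sigma^2)})$ of $H_0$, and the exponent $\gamma^2/8$ survives the slack from halving $\gamma$. This gives~\eqref{eq:gaussian-displacement}. For regularity, use~\eqref{eq:atomic-derivative}: the $q_i$ term dominates with weight $\phi(G_i/\sigma)/\sigma\approx\phi(0)/\sigma$, and the remaining terms are exponentially small. So the Jacobian is $\sigma^{-1}(\phi(0)\diag(1/n_i)DG+o(1))$, which is invertible.

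\textbf{Global step (main obstacle).} Here I must show there are no Gaussian zeros outside $\bigcup_{H_0}U_0$ for small $\sigma$. Argue by contradiction. Take $\sigma_j\to0$ and exact zeros outside these neighborhoods. Proposition~\ref{prop:atomic-limit} gives a subsequence converging to a discrete bisector. By finiteness of $\mathcal B$, that bisector is some $H_0$, and eventually the zeros lie in $U_0$, a contradiction. The bounded offsets follow from~\eqref{eq:offset-bound}. Orientation is handled by oddness, so the count is modulo orientation. Injectivity follows from uniqueness in each $U_0$, with the $U_0$ chosen disjoint. Convergence $H_\sigma\to H_0$ is part of the local step. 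The delicate point is uniformity: the neighborhood $U_0$ and the contraction constants must be independent of $\sigma$. The $\Phi^{-1}$ rescaling is what makes this work, since it avoids dividing by the $1/\sigma$ Jacobian scale.

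\textbf{Case $N=d$.} Each color is a single point. The only bisector is the hyperplane through all $d$ points, which is affinely independent. Then $f_{i,\sigma}=\Phi(s_{q_i}/\sigma)-\tfrac12$ vanishes exactly when $s_{q_i}=0$ for all $i$, independently of $\sigma$.
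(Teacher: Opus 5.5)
Your argument is correct and shares the paper's skeleton. Both use the signed distances $y_i=u\cdot q_i-a$ as local coordinates, exponentially small off-plane tails, inversion through $\Phi^{-1}$ for the displacement bound, a Jacobian dominated by the $\phi(0)/\sigma$ terms, and a compactness argument via Proposition~\ref{prop:atomic-limit}.

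The genuine difference is how you obtain local uniqueness. The paper rescales $y=\sigma z$, extends the tails by zero at $\sigma=0$, and applies the implicit function theorem at $(\sigma,z)=(0,0)$. This gives uniqueness only in a bounded $z$-neighborhood, i.e.\ an $O(\sigma)$-neighborhood of $H_0$. Its global step must therefore also use the equations to show $\Phi(y_i(H_j)/\sigma_j)\to\tfrac12$, hence $z_j\to0$, before the limit lands in the uniqueness neighborhood. Your fixed-point form $G=\sigma\Phi^{-1}(\tfrac12-\epsilon)$ has Lipschitz constant $O(\phi(\gamma/(2\sigma)))$ on a ball independent of $\sigma$. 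So uniqueness holds on a fixed $U_0$, and the global step becomes plain compactness. In exchange, the paper's version directly gives smooth dependence of $z(\sigma)$ on $\sigma$ up to $\sigma=0$.

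One small slip: on the neighborhood where the other points are only at distance at least $\gamma/2$, the tails are $O(e^{-\gamma^2/(8\sigma^2)})$, not $e^{-\gamma^2/(2\sigma^2)}$, and their derivatives are $O(\sigma^{-1}e^{-\gamma^2/(8\sigma^2)})$. It is this bound, with no factor $\sigma^{-1}$ on the values themselves, that yields exactly~\eqref{eq:gaussian-displacement}.
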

\begin{proof}
At $H_0$, its signed distances $y_i=u\cdot q_i-a$ to the
distinguished points form local coordinates: their derivative
is invertible by the preceding kernel argument. Shrink the
neighborhood so that all other points keep their sides and have
distance at least $\gamma/2$. In the rescaled coordinates
$y=\sigma z$, the equations are
\begin{equation}\label{eq:gaussian-rescaled}
 n_if_{i,\sigma}(H(\sigma z))
 =\Phi(z_i)-\tfrac12+E_i(\sigma,z)=0,
 \qquad |E_i|\le(n_i-1)\Phi(-\gamma/(2\sigma)).
\end{equation}
The off-plane tails and their derivatives decay exponentially;
on bounded $z$-neighborhoods they extend smoothly by zero at
$\sigma=0$. The $z$-Jacobian there is $\phi(0)I_d$.
The implicit function theorem gives a unique nearby solution
$z(\sigma)$ with invertible Jacobian. The bound on $E_i$ and
bounded derivative of $\Phi^{-1}$ near $1/2$ give
\eqref{eq:gaussian-displacement} and regularity.

If there were additional zeros at $\sigma_j\to0$, compactness
and Proposition~\ref{prop:atomic-limit} would give, after choosing
orientations, $H_j\to H_0\in\mathcal B$. First,
$y_i(H_j)\to0$, because $q_i\in H_0$. Every other point has
distance at least $\gamma$ from this fixed $H_0$, and eventually
at least $\gamma/2$ from $H_j$. Its tail tends to zero, while
its side stays fixed. The equations then give the stronger limit
\[
 \Phi\bigl(y_i(H_j)/\sigma_j\bigr)\longrightarrow\tfrac12,
 \qquad z_{j,i}=y_i(H_j)/\sigma_j\longrightarrow0.
\]
Thus $(\sigma_j,z_j)$ eventually lies in the implicit-function
uniqueness neighborhood, a contradiction.
Finiteness of $\mathcal B$ gives a common bandwidth threshold.
For $N=d$ the equations are simply $y_i=0$.
\end{proof}

In a chart at $H_0$ whose tangent coordinates are
$(\delta a,E\xi)$, let $C_{H_0}$ have rows $(-1,q_i^TE)$.
The same proof and~\eqref{eq:atomic-derivative} give
\begin{equation}\label{eq:atomic-endpoint-condition}
 \sigma J_\sigma(H_\sigma)\longrightarrow
           \diag(\phi(0)/n_i)C_{H_0}.
\end{equation}
Thus the inverse target Jacobian is $O(\sigma)$ for fixed data.
This endpoint bound gives no lower singular-value bound in the
interior of a chosen homotopy.

\subsection{Reconstruction}
The computed smooth bisector usually misses the input points.
We show how the closest points determine an exact discrete answer
once the bandwidth and residual are sufficiently small.

\begin{thm}[Approximate cuts]\label{thm:point-rounding}
Put $n_{\max}=\max_i n_i$ and fix $0\le\delta_0<1/(2n_{\max})$.
For all sufficiently small $\sigma$, every hyperplane with
$\max_i|f_{i,\sigma}(H)|\le\delta_0$ has a unique closest point
$q_i$ of each color, and the affine hyperplane through them is
an exact discrete bisector.
\end{thm}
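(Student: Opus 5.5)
Let $n_i=2k_i+1$. I would argue with unnormalized counts: if $\max_i|f_{i,\sigma}(H)|\le\delta_0$, then
\[
 \Bigl|\sum_{p\in P_i}\Phi(s_p/\sigma)-n_i/2\Bigr|\le n_i\delta_0<\tfrac12 .
\]
The plan is to show that for small $\sigma$ every such $H$ lies near some $H_0\in\mathcal B$, and that the distinguished points of $H_0$ are then the unique closest points of $H$. Fix a threshold $\lambda$ with $\lambda<r_*/2$. There are two cases.

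\emph{Case 1: some color has no point within distance $\lambda$ of $H$.} Then every summand of that color equals $0$ or $1$ up to $\Phi(-\lambda/\sigma)$. The sum is then within $n_i\Phi(-\lambda/\sigma)$ of an integer. That integer differs from $n_i/2$ by at least $1/2$, which contradicts the displayed bound once $n_{\max}\Phi(-\lambda/\sigma)<1/2-n_{\max}\delta_0$. This step is where the strict inequality $\delta_0<1/(2n_{\max})$ is used.

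\emph{Case 2: every color has a point within $\lambda$ of $H$.} By the slab parameter, the closed slab of half-width $\lambda<r_*$ around $H$ contains at most $d$ points. So it contains exactly one point $q_i$ of each color, and each such point is the unique closest point of its color. All other points lie at distance greater than $\lambda$, so their contributions are $0$ or $1$ up to $\Phi(-\lambda/\sigma)$.

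The main obstacle is uniqueness of closest points at distance exactly near $\lambda$, together with turning the approximate count into an exact one. To handle this, I would rather choose two thresholds $\lambda_1<\lambda_2<r_*/2$ and use a finite-configuration argument: since the slab of half-width $r_*$ holds at most $d$ points, the $(d+1)$-st nearest point is at distance at least $r_*$. So in Case 2 the other points are at distance at least $r_*-\lambda$ from $H$, and I would simply take $\lambda=r_*/3$, leaving the others at distance $\ge 2r_*/3$.

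Next, write $K_i$ for the number of other points of color $i$ on the positive side. The equation gives
\[
 \bigl|K_i+\Phi(s_{q_i}/\sigma)-k_i-\tfrac12\bigr|\le n_i\delta_0+(n_i-1)\Phi(-2r_*/(3\sigma))<\tfrac12 .
\]
Since $0<\Phi<1$ and $K_i$ is an integer, this forces $K_i=k_i$, so the other $2k_i$ points split $k_i$ to $k_i$.

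Finally, let $H'$ be the affine hyperplane through $q_1,\dots,q_d$. It is unique because these $d$ points are affinely independent by general position. I would show that no other point changes side when passing from $H$ to $H'$. Each $q_i$ is within $r_*/3$ of $H$, and the $q_i$ form a configuration whose conditioning is controlled by finitely many quantities. So $H'$ is close to $H$ on any bounded region, but a quantitative bound in terms of $r_*$ alone is delicate.

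The cleaner route avoids this estimate. Argue by contradiction with $\sigma_j\to0$: Proposition~\ref{prop:atomic-limit} gives limits in $\mathcal B$, and near each $H_0\in\mathcal B$ the closest points are its distinguished points, with all others at distance at least $\gamma$. That yields the statement for all sufficiently small $\sigma$, uniformly over the finite set $\mathcal B$. In this version Case 1 serves to rule out spurious limits, and the counting identity confirms that $H'=H_0$ is an exact discrete bisector.
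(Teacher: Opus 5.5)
Your final route is the paper's own argument: take counterexamples with $\sigma_j\to0$, extract a convergent subsequence $H_j\to H_0$, show $H_0\in\mathcal B$, and then note that the distinguished points of $H_0$ are eventually the unique closest points of $H_j$, with affine hull $H_0$. The gap is the step ``Proposition~\ref{prop:atomic-limit} gives limits in $\mathcal B$''. That proposition requires residuals $\delta_j\to0$, whereas here they are only bounded by the fixed $\delta_0$. This is exactly where the hypothesis $\delta_0<1/(2n_{\max})$ must enter the limit argument. The paper reruns that argument with $\delta_0$: if an open side of $H_0$ contained at least $(n_i+1)/2$ points of color $i$, their smoothed mass would tend to at least $1/2+1/(2n_i)>1/2+\delta_0$, so $H_0$ is a discrete bisector. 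Your remark that Case~1 ``rules out spurious limits'' does not replace this. Case~1 only places one point of each color within $r_*/3$ of $H_j$, and that distance does not tend to zero. So it does not give $q_i\in H_0$, which you need in order to identify $H'$ with $H_0$.

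Your Cases 1--2 do close the gap once you extract one more consequence of the counting identity. With $K_i=k_i$ it reads $|\Phi(s_{q_i}/\sigma)-\tfrac12|\le n_i\delta_0+(n_i-1)\Phi(-2r_*/(3\sigma))$. The right side is eventually below a constant $\theta<\tfrac12$, hence $|s_{q_i}|\le\sigma\Phi^{-1}(\tfrac12+\theta)=O(\sigma)$. In the limit this gives $q_i\in H_0$. The remaining points, at distance at least $2r_*/3$ from every $H_j$, keep their sides. So $H_0$, the plane through the $q_i$, splits them $k_i$ to $k_i$. The same $O(\sigma)$ bound also finishes the direct route you abandoned. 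There are finitely many choices of $(q_1,\ldots,q_d)$, and each $A(q)$ has full rank. Hence the plane through the $q_i$ is $O(\sigma)$-close to $H$ in coefficient space, and no point at distance at least $2r_*/3$ changes side. This is Theorem~\ref{thm:explicit-rounding}. What was missing there was the bound $|s_{q_i}|=O(\sigma)$ (rather than $\le r_*/3$) together with the positive constant $\min_q\sigma_{\min}(A(q))$, not an estimate in terms of $r_*$ alone.
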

\begin{proof}
Otherwise choose counterexamples with $\sigma_j\to0$.
The offset bound~\eqref{eq:offset-bound} gives a convergent
subsequence with limit $H_0$. If one open side of $H_0$ contained
at least $(n_i+1)/2$ points, their smooth mass would tend to at
least $1/2+1/(2n_i)>1/2+\delta_0$, a contradiction.
Hence $H_0$ is a discrete bisector and contains exactly one point
$q_i$ per color. All others have positive distance from it, so
eventually these same $q_i$ are uniquely closest to $H_j$.
Their affine hull is $H_0$, contradicting the counterexample.
\end{proof}

For an explicit bandwidth, assume $\|p\|\le R$ and set
\[
 \rho_* =\min\{1,r_*/2\},\qquad R_1=\sqrt{1+R^2},\qquad
 A(q)=\begin{pmatrix}q_1^T&-1\\\vdots&\vdots\\q_d^T&-1\end{pmatrix}.
\]
Let $\beta_* =\min_{q_i\in P_i}\sigma_{\min}(A(q))>0$, taking the
minimum over all choices of one point from each color and using
the smallest of the $d$ singular values. Each $A(q)$ has rank $d$
by affine general position; finiteness makes the minimum positive. Put
$c_0=\Phi^{-1}(3/4)$ and $T=\sqrt{2\log(8n_{\max})}$.

\begin{thm}[Sufficient bandwidth]\label{thm:explicit-rounding}
Reconstruction succeeds whenever
\begin{equation}\label{eq:explicit-bandwidth}
 0<\sigma\le\sigma_{\rm rec}:=
 \min\left\{\frac{\rho_*}{T},
       \frac{\beta_*\rho_*}{2c_0\sqrt d\,R_1}\right\},
 \qquad\max_i|f_{i,\sigma}(H)|\le\frac1{8n_{\max}}.
\end{equation}
\end{thm}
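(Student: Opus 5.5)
The plan is a direct, non-asymptotic version of the proof of Theorem~\ref{thm:point-rounding}: split the input points into those near $H$ and those far from it, and show that the far points behave like exact unit masses. Fix $H=\{u\cdot x=a\}$ with $\|u\|=1$ and $\max_i|f_{i,\sigma}(H)|\le 1/(8n_{\max})$, and write $s_p=u\cdot p-a$. Call $p$ \emph{far} if $|s_p|>\rho_*$. For a far point, $\Phi(s_p/\sigma)$ is within $\Phi(-\rho_*/\sigma)\le\Phi(-T)$ of $0$ or $1$. The Gaussian tail bound $\Phi(-T)\le\tfrac12e^{-T^2/2}=1/(16n_{\max})$ then shows that the far points of color $i$ contribute in total an integer $K_i$ plus an error of absolute value at most $n_i/(16n_{\max})\le1/16$. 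Here $K_i$ is the number of far points of color $i$ on the positive side.

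First I show that every color has a point in the closed slab $\{|s_p|\le\rho_*\}$. If some color had none, then $n_i(f_{i,\sigma}+\tfrac12)$ would lie within $1/16$ of an integer. But it also lies within $n_i/(8n_{\max})\le1/8$ of the half-integer $n_i/2$, since $n_i$ is odd. Together these force $1/2\le1/8+1/16$, which is false. The slab has half-width $\rho_*\le r_*/2<r_*$, so by the definition~\eqref{eq:slab-parameter} it contains at most $d$ input points. Hence it contains exactly one point $q_i$ of each color. All other points of every color are far, so $q_i$ is the unique closest point of its color to $H$.

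Next, write $n_i=2k_i+1$. The equation for color $i$ reads
\[
 n_if_{i,\sigma}=\bigl(\Phi(s_{q_i}/\sigma)-\tfrac12\bigr)+(K_i-k_i)+\epsilon_i,
 \qquad|\epsilon_i|\le\tfrac1{16},
\]
with left side at most $1/8$ in absolute value. Since $|\Phi-\tfrac12|<\tfrac12$, a nonzero integer $K_i-k_i$ would make the right side at least $1/2-1/16>1/8$ in absolute value. Hence $K_i=k_i$, so the far points of each color split $k_i$ to $k_i$. It also follows that $|\Phi(s_{q_i}/\sigma)-\tfrac12|\le3/16<1/4$, which gives $|s_{q_i}|\le c_0\sigma$.

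The main step, where the second term of $\sigma_{\rm rec}$ enters, is to move from $H$ to the hyperplane $H'$ through $q_1,\dots,q_d$ without any far point changing sides. Let $z=(u,a)\in\R^{d+1}$, so that $\|z\|\ge1$ and $A(q)z=(s_{q_i})_i$ has norm at most $c_0\sigma\sqrt d$. Since $A(q)$ has rank $d$, its kernel is a line. Decompose $z=w+e$ with $w\in\ker A(q)$ and $e\perp\ker A(q)$. Then
\[
 \|e\|\le c_0\sigma\sqrt d/\beta_*\le\rho_*/(2R_1)\le\tfrac12 .
\]
Thus the $u$-part of $w$ is nonzero, because its norm is at least $1-\|e\|>0$. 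Therefore $w$ defines an affine hyperplane $H'$, and $A(q)w=0$ says that $H'$ contains every $q_i$. For a far point $p$,
\[
 (p,-1)\cdot w=s_p-(p,-1)\cdot e,\qquad |(p,-1)\cdot e|\le R_1\|e\|\le\rho_*/2<|s_p|,
\]
so the sign of $s_p$ is preserved. Rescaling $w$ to unit normal does not change signs. Consequently each color has $q_i$ on $H'$ and $k_i$ points in each open side, so $H'$ is an exact discrete bisector, as in the description of $\mathcal B$. I expect this conditioning step to be the only delicate one: I need to check that the bound via $\beta_*$ uses the correct singular value (the smallest of the $d$ nonzero ones, acting on $\ker A(q)^\perp$) and that the normalization of $(u,a)$ is handled correctly.
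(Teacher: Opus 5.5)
Your proof is correct and follows the paper's argument essentially step for step. The shared steps are the closed slab of half-width $\rho_*$ with the integer versus half-integer parity argument placing exactly one $q_i$ of each color in it, the count forcing $K_i=k_i$ and $|s_{q_i}|\le c_0\sigma$, and the orthogonal projection of $(u,a)$ onto $\ker A(q)$ (the paper writes it as $h-A(q)^\dagger A(q)h$) controlled by $\beta_*$ and $R_1$. The differences are only cosmetic: you use the sharper tail bound $\Phi(-T)\le\tfrac12e^{-T^2/2}$ instead of $e^{-T^2/2}$, and you read off unique closeness directly from the slab rather than through $c_0\sigma<\rho_*$. Your use of the smallest of the $d$ nonzero singular values on $(\ker A(q))^\perp$ is exactly what the definition of $\beta_*$ provides.
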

\begin{proof}
The slab $|u\cdot p-a|\le\rho_*$ contains at most $d$ points.
Let $\delta=1/(8n_{\max})$ and
$\tau=\Phi(-\rho_*/\sigma)\le e^{-\rho_*^2/(2\sigma^2)}\le\delta$;
the normal-tail bound follows from the exponential Markov inequality.
If a color had no point in the slab, its unnormalized smooth mass
would be within $n_i\tau$ of an integer and within $n_i\delta$
of the half-integer $n_i/2$. This would imply
$1/2\le n_i(\tau+\delta)\le1/4$, impossible. Exactly one point
$q_i$ of each color therefore lies in the slab.

Let $k_i$ count its other points on the positive side and
$s_i=u\cdot q_i-a$. Removing their tails gives
\[
 |k_i+\Phi(s_i/\sigma)-n_i/2|
 \le n_i\delta+(n_i-1)\tau<1/4.
\]
Write $m_i=(n_i-1)/2$. Since $k_i-m_i$ is an integer and
$0<\Phi(s_i/\sigma)<1$, the displayed inequality forces
\[
 k_i=m_i,\qquad
 \tfrac14<\Phi(s_i/\sigma)<\tfrac34,\qquad
 |s_i|<\Phi^{-1}(3/4)\sigma=c_0\sigma<\rho_*.
\]
The last inequality uses $\sigma\le\rho_*/T$ and $T>c_0$.
Thus $q_i$ is uniquely closest. In the Euclidean coefficient
space $\R^{d+1}$, project $h=(u,a)$ onto
$\ker A(q)$ by $\widehat h=h-A(q)^\dagger A(q)h=(v,b)$.
Then
\[
 \|\widehat h-h\|<c_0\sqrt d\,\sigma/\beta_*
 \le\rho_*/(2R_1)\le1/2.
\]
Since $\|u\|=1$, one has
$\|v\|\ge1-\|v-u\|>1/2$. Hence $v\cdot x=b$ is a well-defined
plane through the $q_i$. No projective distance is used here.
For every other point its defining affine value changes by less
than $R_1\|\widehat h-h\|<\rho_*/2$, preserving its side.
Each color has exactly $(n_i-1)/2$ points on either side.
\end{proof}

This yields a practical stopping rule. After the parity reduction,
choose a bandwidth and compute residual at most $1/(8n_{\max})$.
Take the closest point of each color, form their plane, and check
the counts in the \emph{original} sets. Return it if all counts
are at most $\lfloor |P_i|/2\rfloor$; otherwise halve the bandwidth
and repeat. Once the residual is attained at each scale, at most
\begin{equation}\label{eq:bandwidth-reductions}
 \max\{0,\lceil\log_2(\sigma_{\rm start}/\sigma_{\rm rec})\rceil\}
\end{equation}
halvings suffice. The geometric minima $r_*,\beta_*$ need not be
enumerated: the count check is the stopping test. For rational
coordinates it uses exact linear algebra and determinant signs.
A fresh model-to-target continuation may be used at each scale;
regularity of one branch across all bandwidths is not assumed.
The count check verifies a returned partition; it does not ensure
that every attempted continuation attains the required residual.

Figure~\ref{fig:point-smoothing} shows two five-point sets.
The smooth cuts each leave three original points on one side
and two on the other. The line through the two closest points
has two of each color on either side and one on the line.
\begin{figure}[htbp]
\centering
\includegraphics[width=\linewidth]{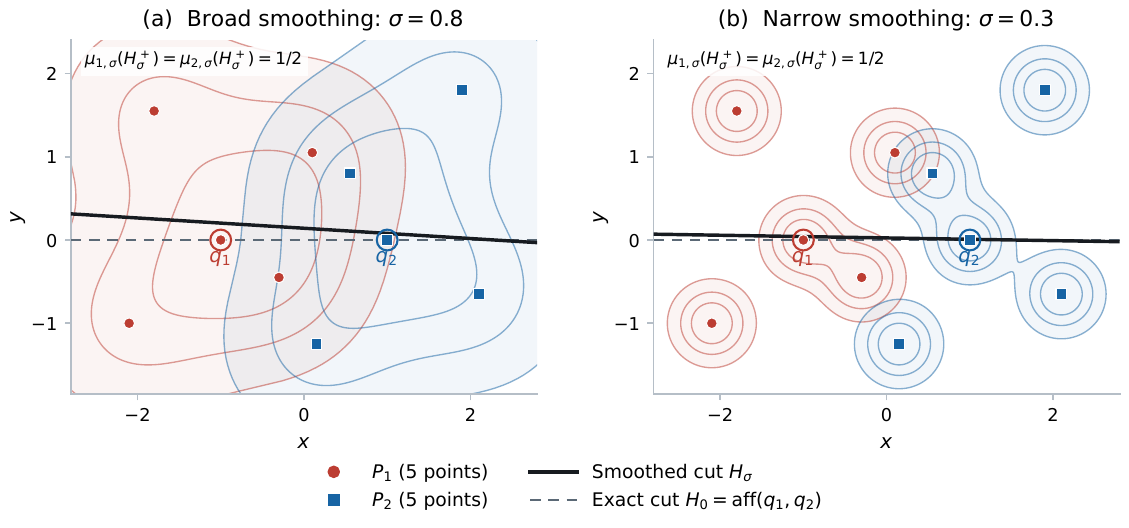}
\caption{Gaussian smoothing and reconstruction. Solid lines are
computed smooth bisectors at two bandwidths. The dashed line
through the circled closest points is an exact discrete bisector.}
\label{fig:point-smoothing}
\end{figure}
\FloatBarrier

\subsection{Complexity: the number of points, step, and accuracy}
\label{sec:complexity}
Continuation complexity is naturally separated into the cost of
one step and the number of steps required by the path. For
polynomial systems, Shub~\cite{ShubCondition} bounds the latter
by length in a condition metric. Beltr\'an and
Leykin~\cite{BeltranLeykin, BeltranLeykinRobust} give explicit
tracking algorithms and distinguish arithmetic from bit complexity.
We follow this separation, deriving the estimates for our smooth
mass maps directly from their Jacobians and second derivatives.
The polynomial-specific condition formulas are not assumed here.

We count arithmetic operations and evaluations of the one-dimensional
functions $\Phi,\phi$ at unit cost. The bandwidth $\sigma$ is fixed
during one continuation. Here $N=\sum_i n_i$ is the total number
of points and $h=1/K$ is the continuation step.

\paragraph{Cost of one step.}
For each point, compute $u\cdot p-a$ and accumulate its contribution
to the value and ambient derivative row in
\eqref{eq:atomic-gaussian}--\eqref{eq:atomic-derivative}.
Convert the accumulated rows to a tangent basis, then solve the
linear systems for prediction and correction. Including the
dimension explicitly, Algorithm~\eqref{eq:algorithm} costs
$O(Nd+d^3)$ per step: the first term comes from the point sums,
the second from basis conversion and dense linear algebra.
With our fixed-dimension convention this is $O(N)$, as are
finding the closest points and checking the reconstructed plane.

\begin{cor}[Step, error, and work]\label{cor:point-complexity}
Assume the hypotheses and bounds of Theorem~\ref{thm:numerical}
for the smoothed ham-sandwich homotopy, and start at the exact
model zero. With one Newton correction per step and
$h=1/K\le h_0$, the corrected points satisfy
\begin{equation}\label{eq:step-accuracy-work}
 \max_{0\le k\le K}\dist(y_k,x(t_k))\le C_0h^4,\qquad
 T=O(N/h).
\end{equation}
Thus the cost is $O(NK)$ and is linear in $N$ for a fixed
admissible step. For a family of inputs, the total tracking cost
is uniformly $O(N)$ when $K$ is bounded independently of $N$.
Distance error at most $\eta$ is obtained by
\begin{equation}\label{eq:accuracy-step-count}
 K=\left\lceil\max\{h_0^{-1},(C_0/\eta)^{1/4}\}\right\rceil,
\end{equation}
with work
\begin{equation}\label{eq:conditional-point-work}
 T=O\left(N
       \left[1+\max\{h_0^{-1},(C_0/\eta)^{1/4}\}\right]\right).
\end{equation}
In particular, with uniform geometric bounds independent of $N$ this
is $O(N\eta^{-1/4})$ as $\eta\to0$.
\end{cor}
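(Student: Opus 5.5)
The corollary is a specialization of Theorem~\ref{thm:numerical} combined with the per-step cost count, so the plan is to combine the two, doing nothing beyond bookkeeping. First, apply Theorem~\ref{thm:numerical} to the smoothed ham-sandwich homotopy, whose hypotheses are assumed. From the exact initial zero it gives $\dist(y_k,x(t_k))\le C_0h^4$ for $1\le k\le K$. The case $k=0$ holds trivially, since $y_0=x(0)$ is exact. Together these give the first bound in~\eqref{eq:step-accuracy-work}. Note that the theorem is stated for the mesh~\eqref{eq:mesh}, whose step is $h=1/K$ with $h\le h_0$. Any $K$ with $1/K\le h_0$ is such a mesh, by taking $\bar h=1/K$, so every admissible step is covered.

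Second, the work. By the paragraph ``Cost of one step'', each prediction--correction step costs $\mathcal W=O(Nd+d^3)$ for the point sums~\eqref{eq:atomic-gaussian}--\eqref{eq:atomic-derivative}, the tangent-basis conversion and the two dense solves. With $d$ fixed this is $O(N)$, since $N\ge d\ge1$ absorbs the $d^3$ term. Inserting this into~\eqref{eq:work} with $m=0$, because no final refinement is required here, gives $T=O((K+1)N)=O(NK)=O(N/h)$. If $K$ is bounded independently of $N$ over a family, this total is $O(N)$. This uses only that $h_0$ and the needed $K$ do not depend on $N$; I will state it as the hypothesis, not derive it.

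Third, the accuracy count. We require $C_0h^4\le\eta$ and $h\le h_0$. Choosing $K$ as in~\eqref{eq:accuracy-step-count} makes $K\ge h_0^{-1}$ and $K\ge(C_0/\eta)^{1/4}$, so $h=1/K$ satisfies both conditions. The ceiling adds at most one, which gives~\eqref{eq:conditional-point-work}. When $C_0$ and $h_0$ are uniform in $N$, the maximum is eventually $(C_0/\eta)^{1/4}$ as $\eta\to0$, so $T=O(N\eta^{-1/4})$.

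The only delicate point is keeping the constants honest. $C_0$ and $h_0$ come from~\eqref{eq:constants} and~\eqref{eq:chart-step}, which depend on $\alpha,B,L,\rho,Q$ along the selected arc. For Gaussian smoothing these may depend on $\sigma$, on $N$, or on the geometry; for instance, \eqref{eq:atomic-endpoint-condition} suggests $\alpha$ scales like $1/\sigma$. So I would be explicit that uniformity of these bounds over the family is assumed rather than proved. The corollary's linear-in-$N$ claims are conditional on exactly that assumption, together with the arithmetic model in which $\Phi,\phi$ are evaluated at unit cost.
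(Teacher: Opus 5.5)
Your proposal is correct and follows the paper's argument: Theorem~\ref{thm:numerical} gives the nodal bound, multiplying the $O(N)$ per-step cost by $K=1/h$ gives the work, and the choice of $K$ in \eqref{eq:accuracy-step-count} enforces both $h\le h_0$ and $C_0h^4\le\eta$. Your extra bookkeeping (the trivial node $k=0$, realizing any admissible $K$ via $\bar h=1/K$, and absorbing $d^3$ using $N\ge d$) is sound; one caution is that \eqref{eq:atomic-endpoint-condition} controls the Jacobian only at $t=1$, so it says nothing about the path-wide constant $\alpha$.
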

\begin{proof}
Theorem~\ref{thm:numerical} bounds the error at every corrected
point by $C_0h^4$. Multiplying the step cost by $K=1/h$ proves
\eqref{eq:step-accuracy-work}. The choice
\eqref{eq:accuracy-step-count} ensures both $h\le h_0$ and
$C_0h^4\le\eta$; substitution gives the work bound.
\end{proof}

This is an error estimate at the corrected nodes. In a chart,
linear interpolation between successive nodes has coordinate error
at most $QC_0h^4+Ch^2/8$, since $\|x''\|\le C$.
Thus the chartwise interpolant has only $O(h^2)$ uniform error.
The endpoint $t=1$ is a
corrected node and has the $O(h^4)$ bound.

\paragraph{What determines an admissible step.}
To make the geometric dependence explicit, substitute
\eqref{eq:constants} into~\eqref{eq:chart-step}. This gives
\begin{equation}\label{eq:condition-count}
 h_0^{-1}=\Gamma:=
 \max\left\{1,h_{\rm ch}^{-1},
      \sqrt8\,Q\kappa\sqrt{1+V^2},
      Q\sqrt{\frac{2\kappa(1+V^2)}{\rho}}\right\},
 \quad \kappa=L/\alpha,\quad V=B/\alpha.
\end{equation}
Indeed, the last two terms give $Q\sqrt{2C/r}$, which also
dominates $D=2\kappa(1+V)$ because
$2(1+V)\le\sqrt8\sqrt{1+V^2}$. The error constant is
$C_0=(9/4)Q\kappa C^2$. These formulas separate the linear
dependence on $N$ from the path's conditioning, derivative bounds,
and chart margins. Uniform bounds independent of $N$ give a
linear-time family for fixed accuracy. Regularity alone does not
give those uniform bounds.

Adaptive steps can avoid imposing the worst local step on the
whole path. Here is an elementary integral bound. Let $s(t)>0$
be a continuous local step scale, and suppose the accepted steps
have been justified, for example by~\eqref{eq:tube-test}. Assume,
except for the final shortened step, that
$\theta s(t_k)\le h_k\le s(t_k)$, where $0<\theta\le1$, and that
$s(t)\le c\,s(t_k)$ on each accepted interval, with $c\ge1$.
Then their number satisfies
\begin{equation}\label{eq:adaptive-step-count}
 K\le1+\frac c\theta\,\mathcal L_s,\qquad
 \mathcal L_s=\int_0^1\frac{dt}{s(t)}.
\end{equation}
Indeed, every nonfinal step contributes at least
$h_k/(c\,s(t_k))\ge\theta/c$ to the integral; summing proves
the bound. With bounded correction work per step this gives
$O(N(1+(c/\theta)\mathcal L_s))$ tracking work.
Unlike a global minimum-step bound, this integral charges a
small local step only over the part of the path where it is
needed. The estimate presupposes validated local steps and the
stated variation bound; it does not replace their verification.

For the Gaussian homotopy the upper derivative bounds do not
require an additional factor of $N$: the sums are normalized
within each color. If all points and model centers have norm at
most $R$, use
$u(\xi)=(u_0+E\xi)/\sqrt{1+\|\xi\|^2}$ with orthonormal
$E\perp u_0$. Differentiation gives $\|Du\|\le1$,
$\|D^2u\|\le6$. Put
\[
 A_s=\frac{\sqrt{1+R^2}}{s\sqrt{2\pi}},\qquad
 H_s=\frac{1+R^2}{s^2\sqrt{2\pi e}}+\frac{6R}{s\sqrt{2\pi}}.
\]
The chain rule and the bounds on $\phi,\phi'$ give
$\|Df_{i,s}\|\le A_s$ and $\|D^2f_{i,s}\|\le H_s$.
For unit-bandwidth Gaussian models and $F=(1-t)g+tf_\sigma$,
we obtain $B=O(1)$ and
$L=O(\max\{H_1,H_\sigma\}+A_1+A_\sigma)$,
with constants independent of $N$.
The mixed spatial--$t$ derivative has bound $A_1+A_\sigma$,
and $F_{tt}=0$. On unit product vectors the two mixed coefficients
have total absolute value at most $1$, proving the Hessian estimate.
Consequently, with $R,\alpha^{-1},\rho^{-1},Q$ and
$h_{\rm ch}^{-1}$ bounded, these formulas give
$\Gamma=O(1+\sigma^{-2})$ for $0<\sigma\le1$.
If a regularizing perturbation is used, its derivatives must be
included in these bounds.

\paragraph{Final accuracy and exact reconstruction.}
If only the endpoint needs higher accuracy, any admissible step
can be kept fixed and additional Newton iterations used at $t=1$.
By~\eqref{eq:main-count}, residual $\delta$ requires at most
$m=O(\log(1+\log^+(C_1/\delta)))$ additional iterations.
The total work is then
\begin{equation}\label{eq:atomic-work}
 O\bigl(N(K+m+1)\bigr),
\end{equation}
and the intrinsic endpoint error is at most $2Q\delta/\alpha$.
This refinement is optional in Corollary~\ref{cor:point-complexity}.

For an exact discrete cut, use the residual
$1/(8n_{\max})$ and the bandwidth-halving procedure above.
If $K_j,m_j$ are the continuation and refinement counts at scale
$j$, its total arithmetic work is
\begin{equation}\label{eq:multiscale-work}
 O\left(N\sum_{j=0}^{J_{\rm scale}}(K_j+m_j+1)\right).
\end{equation}
Equation~\eqref{eq:bandwidth-reductions} bounds the number of
reductions once the requested residuals are attained. Thus the
work per pass remains linear in $N$, while the
bandwidths and path constants must be accounted for separately.
The uniform-step estimates count exactly the operations of
\eqref{eq:algorithm}; an adaptive implementation must also count
rejected steps and extra correction iterations.

\paragraph{Evaluation accuracy.}
Adaptive precision for path tracking is studied by Bates,
Hauenstein, Sommese, and Wampler~\cite{BatesPrecision}.
To retain the $O(h^4)$ nodal error with inexact computation,
it suffices to have velocity error $O(h)$, derivative error
$O(h^2)$, and value or linear-solve residual error $O(h^4)$,
with uniform constants and sufficiently small $h$.
The predictor error is then $O(h^2)$, and each term of
\eqref{eq:inexact} is $O(h^4)$; induction preserves the bound.
The cost of attaining these evaluation tolerances is additional
to the unit-cost arithmetic model.

Distant points can be handled using side counts. Replacing
$\Phi(s_p/\sigma)$ by its limiting side indicator and its derivative
by zero for $|s_p|>T\sigma$ gives component errors at most
$\Phi(-T)$ and $\sqrt{1+R^2}\phi(T)/\sigma$, respectively.
These follow by averaging the omitted tails, and can be made
to meet the preceding tolerances. A direct scan still costs
$O(N)$; only the normal-distribution evaluations are restricted
to the narrow layer.

Smoothing does not supply a uniform conditioning bound. Even
$\{-1,1\}\subset\R$ has median derivative
$|f'_\sigma(0)|=\phi(1/\sigma)/\sigma$, exponentially small as
$\sigma\to0$. Odd data avoid this particular endpoint effect
by~\eqref{eq:atomic-endpoint-condition}, but may still have
poorly conditioned continuation paths. Uniform linear-time
bounds therefore require uniform control of the path constants.

\begin{remark}[Weights]
Positive normalized weights can replace $1/n_i$ in the finite
sums without changing their evaluation cost. The discrete limit
proof still gives at most half of each color's weight in either
open half-space. For rational weights $w_{ip}=m_{ip}/W_i$,
clearing denominators gives exact tests
$\sum_{p\in H^\pm}m_{ip}\le\lfloor W_i/2\rfloor$;
coincident replication of atoms is unnecessary. The regularity
and reconstruction theorems above concern equally weighted odd
sets in general position.
\end{remark}

\section{Finding multiple solutions}\label{sec:all}
We now vary the starting model while keeping the target problem
fixed. The questions are which solutions become reachable and
when a search through the model parameters can stop.

Assume throughout this section that the target zeros are regular.
There are then finitely many zero orbits. The quotient zero set
of a regular homotopy consists of circles and intervals. Only one
interval meets the unique initial orbit; every other target endpoint
is paired with another target endpoint. Following the selected arc,
even through its folds, cannot reach those other components.

\subsection{Changing the model}
We begin with the stability of the selected answer under small
changes of the model, then prove reachability for ham-sandwich cuts.

In a smooth family of regular homotopies with a fixed regular
target, the selected endpoint is locally constant. For a smooth
family this can be seen directly. Along a compact parameter path
$p(s)$, the total zero set modulo $G$ projects submersively to $s$,
because $D_{(x,t)}F$ is onto. At the horizontal boundaries,
invertibility of $D_xF$ gives lifts of $\partial_s$ tangent to those
boundaries. A partition of unity combines these lifts; compactness
makes their flow identify the zero curves. On the target boundary
the spatial component is zero, since the target is fixed and regular.
The initial endpoint therefore remains paired with the same target.
Thus endpoint changes require a singularity of the full homotopy
or a boundary degeneracy; spatial folds alone do not change the pairing.

\begin{prop}[A compatible model]\label{prop:compatible-model}
Let $g_p$ be a smooth family of one-orbit models and $q$ a regular
zero of $f$. If $g_{p_0}(q)=0$ and
\begin{equation}\label{eq:compatible-pencil}
 (1-t)Dg_{p_0}(q)+tDf(q)
\end{equation}
is invertible for $0\le t\le1$, then every $p$ in a neighborhood
of $p_0$ has a regular monotone branch from its model zero to $q$.
\end{prop}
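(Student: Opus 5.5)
Since $g_{p_0}(q)=0$ and $g_{p_0}$ has a single zero orbit, $q$ is its model zero, so $q$ is a common zero of both endpoint maps. I therefore look for the branch near the constant curve $x\equiv q$. Put $F_p(x,t)=(1-t)g_p(x)+tf(x)$. At $p=p_0$ we have $F_{p_0}(q,t)=0$ for every $t$, and $D_xF_{p_0}(q,t)$ is exactly the pencil \eqref{eq:compatible-pencil}, which is invertible by hypothesis. So for $p=p_0$ the constant curve $x(t)=q$ is a regular monotone branch from the model zero to $q$. The task is to show that this persists for nearby $p$, and that the persisting branch really starts at the model zero of $g_p$ and ends at $q$.

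\textbf{Step 1 (uniform implicit function).} I work in a chart around $q$; the orbit is handled by equivariance, and because the action is free, a small neighborhood of $q$ meets $Gq$ only in $q$. Consider the map $\Psi(x,t,p)=F_p(x,t)$ on a neighborhood of $\{q\}\times[0,1]\times\{p_0\}$. Its spatial derivative is invertible along the compact segment $\{q\}\times[0,1]$, so continuity gives $\rho>0$ and a neighborhood $P$ of $p_0$ such that $\sigma_{\min}(D_xF_p)\ge\alpha/2$ on $\|x-q\|\le\rho$, $t\in[0,1]$, $p\in P$. The residual $\|F_p(q,t)\|=(1-t)\|g_p(q)\|$ tends to $0$ uniformly in $t$ as $p\to p_0$. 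A contraction argument on the closed ball then gives, for each $t$ and each $p\in P$ (after shrinking $P$), a unique zero $x_p(t)$ in $B(q,\rho)$. I would use the Newton-type map $T(u)=u-D_xF_{p_0}(q,t)^{-1}F_p(q+u,t)$, exactly as in the tube test \eqref{eq:tube-test}. The implicit function theorem makes $t\mapsto x_p(t)$ a $C^1$ graph with invertible $J$ along it. Hence it is regular and monotone in $t$, and it satisfies \eqref{eq:ode}.

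\textbf{Step 2 (identifying the endpoints).} At $t=1$ we have $F_p(\cdot,1)=f$, and $q$ is a zero of $f$ in the ball. Uniqueness in the ball then gives $x_p(1)=q$. At $t=0$, the point $x_p(0)$ is a zero of $g_p$. Because $g_p$ has one zero orbit, this point lies on the model orbit of $g_p$. So the branch starts at the model zero, in the representative lying near $q$, and ends at $q$.

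\textbf{Step 3 (upgrading to the selected branch).} If one wants this branch to be the one supplied by Theorem~\ref{thm:continuation}, note that the component through $(x_p(0),0)$ contains the graph. Since the graph already joins the two boundaries and $Z$ is a one-manifold near it, the component equals the graph.

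The main obstacle is Step 1: I need the neighborhood of $p_0$ to work uniformly for all $t\in[0,1]$. Pointwise invertibility of the pencil only gives local statements. The fix is to use compactness of $[0,1]$ to obtain a uniform $\alpha$, and to use continuity of $Dg_p$ in $p$ (from smoothness of the family) on a fixed compact tube, so that the contraction constants are independent of $t$. Everything after that is bookkeeping.
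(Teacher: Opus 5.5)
Your proposal is correct and follows essentially the paper's argument. You use the constant branch $x\equiv q$ at $p_0$, a uniform inverse bound for the pencil from compactness of $[0,1]$, persistence of the graph for nearby $p$, and then identify the endpoints: uniqueness in the ball gives $q$ at $t=1$, and the one-orbit hypothesis gives the model zero at $t=0$. Your uniform contraction on a fixed tube replaces the paper's step of gluing local implicit-function graphs by uniqueness, and your Step~3 spells out that this graph is the component selected in Theorem~\ref{thm:continuation}, which the paper leaves implicit.
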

\begin{proof}
At $p_0$ the branch is constant, $x(t)=q$. Compactness gives a
uniform inverse bound for~\eqref{eq:compatible-pencil}.
The implicit function theorem gives nearby graphs for all $t$;
uniqueness glues them on overlaps. At $t=0$ the graph is the
model zero near $q$, and at $t=1$ it is the fixed zero $q$.
\end{proof}

\begin{thm}[Reachability of all regular ham-sandwich cuts]
\label{cor:all-ham}
Let $d$ positive Schwartz densities have only regular common
bisectors. In the family of models consisting of $d$ standard
Gaussian measures with affinely independent centers, every target
bisector is reached by a regular monotone branch for a nonempty
open set of center configurations.
\end{thm}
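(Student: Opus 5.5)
The plan is to apply Proposition~\ref{prop:compatible-model} with $g_p$ the Gaussian model whose parameter $p=(c_1,\dots,c_d)$ consists of affinely independent centers. Each such model has exactly one zero orbit, the hyperplane through the centers. It is regular by the computation after~\eqref{eq:ham-model}: the Jacobian in unit-normal coordinates is $\phi(0)$ times the matrix with rows $(-1,c_i^TE)$, which is invertible exactly when the centers are affinely independent. The family is smooth in $p$, since the map~\eqref{eq:ham-model} depends smoothly on the centers. Fix a regular target bisector $q=H=\{u\cdot x=a\}$ of $f$. It suffices to find one admissible configuration $p_0$ with $g_{p_0}(q)=0$ for which the pencil~\eqref{eq:compatible-pencil} is invertible for every $t\in[0,1]$. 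Proposition~\ref{prop:compatible-model} then yields an open neighborhood of $p_0$ consisting of models with a regular monotone branch ending at $q$. Openness of affine independence keeps this neighborhood inside the model family.

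The key step is the choice of $p_0$. We take $c_i=c_i(H)$, the section centroids of the target densities on $H$. These lie in $H$, so $g_{p_0}(q)=0$. They are affinely independent by Lemma~\ref{lem:centroid}, because $q$ is a regular zero. In the common unit-normal chart at $H$, formula~\eqref{eq:ham-derivative} and Lemma~\ref{lem:centroid} give
\[
 Df(q)=\diag(m_i)\,C_H,\qquad Dg_{p_0}(q)=\phi(0)\,C_H,
\]
where both use the same matrix $C_H$ with rows $(-1,c_i(H)^TE)$. The derivative formula for the Gaussian model is the special case of~\eqref{eq:ham-derivative} for Gaussian densities, since the section centroid of $\mathcal N(c_i,I)$ on a hyperplane through $c_i$ is $c_i$ and its section mass is $\phi(0)$. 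The pencil therefore equals
\[
 \diag\bigl((1-t)\phi(0)+t\,m_i(H)\bigr)\,C_H .
\]
Every diagonal entry is a convex combination of positive numbers, so it is positive. Since $C_H$ is invertible, the pencil is invertible for all $t\in[0,1]$.

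The remaining check is that the coordinates are compatible, and this is where care is needed. Both maps must be written in the same chart of $S^d$ near $q$, the unit-normal coordinates $(a,u)$ with $\delta u\perp u$. The orientation of the zero must also be consistent: $q$ is the oriented hyperplane, and the model's zero orbit $\pm q$ contains it. Proposition~\ref{prop:compatible-model} requires only that $q$ be a regular zero of $f$ that is also the model zero, so these identifications suffice. Regularity of the target, assumed for all its bisectors, is used only for this particular $q$. Since $q$ was an arbitrary target bisector, every target bisector is reached for a nonempty open set of center configurations.
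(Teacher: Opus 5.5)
Your proposal is correct and follows essentially the same route as the paper: you take the section centroids of the target bisector as the Gaussian centers, observe that both Jacobians factor through the same matrix $C_H$ so the pencil is $\diag\bigl((1-t)\phi(0)+tm_i\bigr)C_H$, and apply Proposition~\ref{prop:compatible-model}. Your extra remarks on smoothness in the centers and on openness of affine independence make explicit what the paper leaves implicit.
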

\begin{proof}
Fix a regular target bisector $H$ and choose its section centroids
$c_i$ as the Gaussian centers. By~\eqref{eq:ham-derivative}, they
are affinely independent, so this model has $H$ as its unique zero
orbit. In unit-normal tangent coordinates let $C_H$ have rows
$(-1,c_i^TE)$. Then
$Df(H)=\diag(m_i)C_H$ and $Dg(H)=\phi(0)C_H$.
Their pencil is
\[
 \diag\bigl((1-t)\phi(0)+tm_i\bigr)C_H,
\]
which is invertible for every $t$. Apply
Proposition~\ref{prop:compatible-model}.
\end{proof}

The centroid choice proves existence of successful model parameters;
it is not an efficient procedure for finding an unknown cut.
One possible search interleaves a dense sequence of starts with
increasing finite computation budgets and decreasing step sizes.
Every regular cut can then be approximated to any prescribed
accuracy: each has an open neighborhood with a common positive
admissible step. Interleaving prevents a singular run from blocking
later starts. A finite stopping rule additionally needs a known
solution count or quantitative bounds on successful neighborhoods.
The theorem concerns the full family of Gaussian centers. It does
not prove that every prescribed family of orthogonal projections,
parametrized only by their kernel lines, reaches every target zero.

Related approaches include continuation and monodromy for polynomial
systems~\cite{DuffEtAl}, real monodromy~\cite{HauensteinRegan},
and deflation to remove already known roots~\cite{FarrellDeflation}.
Their completeness conditions are different. Here we explore the
parameters of a model while keeping the target fixed.

\subsection{Finite parameter regions}\label{sec:regions}
A finite search is possible when the parameter space splits into
finitely many regions with constant answers. Piecewise-linear maps
allow us to describe such regions by finite systems of inequalities.

Let $K$ be a fixed finite equivariant triangulation of the cylinder
over a closed free $G$-PL $n$-manifold, with both boundary slices
as subcomplexes. On a compact semialgebraic
parameter space $P$, prescribe continuous semialgebraic vertex
values $a_v(p)\in V$, equivariantly in $v$, and extend affinely
on each simplex. Denote the resulting homotopy by $F_p$.
Assume a unique initial zero orbit for every $p$ and a fixed target
whose zeros avoid its $(n-1)$-skeleton. Define
\begin{equation}\label{eq:PL-discriminant}
 \Delta_K=\{p:F_p^{-1}(0)\cap K^{(n-1)}\ne\varnothing\},
 \qquad U=P\setminus\Delta_K.
\end{equation}

\begin{thm}[Finite endpoint regions]\label{thm:finite-regions}
The set $\Delta_K$ is closed and semialgebraic. Its complement
has finitely many connected components, and the selected target
orbit is constant on each. For each target orbit $q_i$, its region
$B_i\subset U$ is relatively open and semialgebraic with finitely
many connected components, and $\partial_PB_i\subset\Delta_K$.
Here $\partial_P$ is the boundary relative to $P$. If $P$ is an
$r$-dimensional semialgebraic manifold, possibly with boundary,
these relative boundaries
have finite smooth stratifications of dimension at most $r-1$.
\end{thm}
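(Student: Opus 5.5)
The plan is to express everything through finitely many semialgebraic conditions, one for each simplex of $K$, and then use Proposition~\ref{prop:pl} together with a local-constancy argument on $U$.

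\emph{Closedness and semialgebraicity of $\Delta_K$.} Write $\Delta_K$ as the finite union, over simplices $\tau$ of $K^{(n-1)}$, of
\[
\Delta_\tau=\{p:\exists\lambda\in\Delta^{\dim\tau},\ \textstyle\sum_i\lambda_i a_{v_i}(p)=0\}.
\]
This set is the projection to $P$ of a semialgebraic subset of $P\times\Delta^{\dim\tau}$, so Tarski--Seidenberg makes it semialgebraic. That subset is also closed, because the $a_v$ are continuous. The projection along the compact factor $\Delta^{\dim\tau}$ is proper, so $\Delta_\tau$ is closed. Hence $U$ is open and semialgebraic in $P$. Finiteness of its connected components is the standard finiteness theorem for semialgebraic sets.

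\emph{Constancy on components.} For $p\in U$, the hypotheses of Proposition~\ref{prop:pl} hold: the initial zeros form one orbit and the skeleton is avoided. So the selected chain is a finite sequence of top simplices $\Sigma(p)=(\tau_0,\dots,\tau_k)$ joined through $n$-faces, and it ends at a target orbit $e(p)$. I will show $e$ is locally constant on $U$. Fix $p_0\in U$. For the simplices and $n$-faces crossed by the chain, the zero segments are defined by the invertible systems from the proof of Proposition~\ref{prop:pl}. Their solutions depend continuously on $p$, and their barycentric coordinates stay strictly positive for $p$ near $p_0$, because strict positivity is exactly skeleton avoidance.

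The main obstacle is to rule out a new branch appearing. The chain is determined combinatorially by which $n$-faces of each simplex contain zeros. In a top simplex $\tau$ avoiding the skeleton, the zero set is either empty or a segment with exactly two endpoints on $n$-faces. Whether a given $n$-face contains a zero is an open condition, since the solution of the invertible face system has positive coordinates. It is also a closed condition within $U$: a limit of zeros lies in the closed face, and a boundary point of the face would be a skeleton zero, which is excluded. Therefore the full incidence pattern of $F_p^{-1}(0)$ with $K$ is locally constant on $U$. So the one-manifold, and with it the pairing of the initial orbit with $e(p)$, is constant near $p_0$. Because $e$ is locally constant on $U$, it is constant on every connected component.

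\emph{Regions $B_i$.} The region $B_i=e^{-1}(q_i)$ is a union of components of $U$. It is therefore relatively open in $P$ and semialgebraic, and it has finitely many components. Alternatively, it is the finite union over combinatorial chains of the semialgebraic sets where that chain occurs. For the boundary, take $p\in\partial_PB_i$. If $p\in U$, then $p$ lies in an open component of $U$ that meets $B_i$, so $p\in B_i$, and $B_i$ is open, so $p$ is not a boundary point. Hence $p\in\Delta_K$.

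\emph{Dimension.} The boundary $\partial_PB_i$ is a closed semialgebraic set, and its complement in $P$ contains the open set $B_i$. I will prove $\dim\partial_PB_i\le r-1$ by contradiction. If the dimension were $r$, then $\partial_PB_i$ would contain a nonempty open subset $W$ of the $r$-manifold $P$. Every point of $W$ is a limit of points of $B_i$, so $W\cap B_i\neq\varnothing$. But $B_i$ is open and disjoint from its boundary, which is a contradiction. Finally, standard semialgebraic stratification (for example, Whitney stratification in Bochnak--Coste--Roy) gives finitely many smooth strata, each of dimension at most $r-1$.
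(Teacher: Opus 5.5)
Your proposal is correct and follows essentially the same route as the paper. The steps match one for one: $\Delta_K$ as a finite union of projections of barycentric feasibility sets along compact simplices, and local constancy on $U$ of which $n$-faces contain zeros (your open/closed argument is the paper's ``positive coordinates persist'' and ``positive distance persists''), hence of the whole incidence graph. Then $B_i$ is a union of components of $U$, and standard semialgebraic facts finish the proof; your contradiction argument for $\dim\partial_PB_i\le r-1$ just spells out what the paper cites as a standard property of open semialgebraic subsets of a manifold.
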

\begin{proof}
A zero on a face $[v_0,\ldots,v_k]$, $k<n$, is equivalent to
the feasibility of
\[
 \lambda_j\ge0,\quad \sum_j\lambda_j=1,\quad
 \sum_j\lambda_j a_{v_j}(p)=0.
\]
Semialgebraic projection and a finite union show that $\Delta_K$
is semialgebraic. Compactness of the barycentric simplex shows it
is closed. Its complement has finitely many semialgebraic components.

For $p\in U$, a zero on an $n$-face is interior and nonsingular,
by the barycentric argument in Proposition~\ref{prop:pl}.
Its positive coordinates persist under small parameter changes.
A face without a zero has positive distance of its image from
zero, which also persists. Thus the list of zero-containing faces
is locally constant. Each top simplex joins precisely its two
zero-containing faces, so the entire zero-segment incidence graph
and its quotient are locally constant. The fixed target marks
and the unique initial mark then select the same endpoint.
Each $B_i$ is a finite union of components of $U$; its boundary
cannot meet $U$. The stratification and dimension assertions are
the standard relative-boundary properties of open semialgebraic
subsets of a manifold.
\end{proof}

These walls depend on the triangulation and can be artificial;
the same endpoint may persist across them. No claim that $\Delta_K$
has empty interior is needed. The convex-hull feasibility condition
above is the exact test for a wall.
For a general semialgebraic parameter space, retain a finite
stratification and interpret the geometry on its strata, including
their lower-dimensional incidences.

For a \emph{jointly PL} equivariant map on finite polyhedra,
intersect the total simplices with $F=0$ and subdivide $P$
compatibly with the projections $\pi(E)$ of all faces of the
quotient zero complex. The relative interior of $E$ meets a fiber
exactly when $p\in\operatorname{relint}\pi(E)$; its dimension is
$\dim E-\dim\pi(E)$. These tests and the face inclusions give
constant fiber incidences on each open parameter cell, including
the boundary marks. General semialgebraic families admit a finite
partition by Hardt's theorem~\cite{Hardt}. Being PL in $x$ for each
$p$ alone does not imply joint PL dependence.

\subsection{Region growing}\label{sec:region-growing}
Start at a model parameter, determine its endpoint, and grow an
explored neighborhood through its frontier. When the component is
exhausted, restart in the unexplored part. This region-growing
search (informally, the \emph{rodent method}) is flood fill
with restarts, using breadth-first or depth-first search~\cite{Tarjan}.
It performs connected-component labeling; Hoshen--Kopelman
labeling~\cite{HoshenKopelman} is another classical implementation
of that task. Boundary following alone does not exhaust interiors
or discover separate components. Covering by overlapping
neighborhoods is also the geometric idea of Henderson's
multiple-parameter continuation~\cite{Henderson}.

\begin{prop}[Finite exhaustion]\label{prop:region-growing}
Suppose a finite triangulation $\mathcal T$ of $P$ represents a
closed exceptional set $\Delta$ by a subcomplex $\mathcal D$, and
the endpoint label on $U=P\setminus\Delta$ is locally constant.
Make a graph with one vertex for every simplex outside $\mathcal D$,
joining two whenever one is a proper face of the other. If the
graph has $C$ vertices, $E$ edges, and $b$ components, repeated
graph search needs one continuation per component and $O(C+E)$
additional work to find all endpoints reachable from $U$.
\end{prop}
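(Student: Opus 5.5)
The plan is to reduce the statement to ordinary connected-component search on the finite graph. First I will identify the components of the graph with the connected components of $U$. Every simplex $\Delta'\in\mathcal T\setminus\mathcal D$ has relative interior contained in $U$. The reason is that $\mathcal D$ is a subcomplex and $\Delta$ is the underlying set $|\mathcal D|$, so an open simplex either lies in $|\mathcal D|$ (and then the simplex is in $\mathcal D$) or is disjoint from it. If $\tau$ is a proper face of $\sigma$ with both outside $\mathcal D$, then $\operatorname{relint}\tau\cup\operatorname{relint}\sigma$ is connected, since $\operatorname{relint}\tau$ lies in the closure of $\operatorname{relint}\sigma$. Hence each graph edge joins open simplices lying in one component of $U$.

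Conversely, $U$ is the disjoint union of the open simplices of $\mathcal T\setminus\mathcal D$. Suppose these simplices are split into two graph-disconnected classes $\mathcal A,\mathcal B$. Then the union of the open simplices in $\mathcal A$ is relatively open in $U$. To see this, take a point $x$ of $U$ in $\operatorname{relint}\tau$. Its open star neighbourhood in $P$ meets only open simplices having $\tau$ as a face. Those outside $\mathcal D$ are graph-adjacent to $\tau$, hence lie in the same class. So the star intersected with $U$ stays in the class of $\tau$. Therefore graph components correspond bijectively to components of $U$, and there are exactly $b$ of them.

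Next I will use local constancy of the label. A locally constant function on a connected set is constant, so the endpoint label is constant on each component of $U$. The algorithm runs breadth-first or depth-first search over the graph. It repeatedly picks an unvisited vertex, runs one continuation at any parameter in that open simplex (for example its barycenter, which lies in $U$), and assigns that label to every vertex reached by the search. Each vertex and each edge is examined $O(1)$ times, giving $O(C+E)$ additional work, and exactly $b$ continuations are performed. Every point of $U$ lies in some open simplex, so every endpoint attained for some $p\in U$ appears among the $b$ labels. This gives all endpoints reachable from $U$.

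The main obstacle is the open-star argument in the second paragraph. It needs a precise justification that a neighbourhood of a point of $\operatorname{relint}\tau$ meets only cofaces of $\tau$. This holds in a finite simplicial complex because the open star of $\tau$ is open in $|\mathcal T|=P$. One should also note that the graph search needs the face relations as input; computing them is part of building the graph, so that cost is not included in the $O(C+E)$ bound.
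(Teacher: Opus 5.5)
Your proof is correct, but it identifies graph components with components of $U$ by a different route than the paper. The paper works in the barycentric subdivision. It discards the coordinates at barycenters of simplices of $\mathcal D$, renormalizes the rest, and uses the straight-line homotopy. This deformation retracts $U$ onto the full subcomplex spanned by the barycenters of simplices outside $\mathcal D$. That subcomplex is the order complex of those simplices, its one-skeleton is exactly your graph, and so the components agree.

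You instead work directly with the partition of $U$ into the open simplices of $\mathcal T\setminus\mathcal D$, in two steps:
\begin{itemize}
\item The inclusion $\operatorname{relint}\tau\subset\overline{\operatorname{relint}\sigma}$ puts graph-adjacent pieces in one component of $U$.
\item Openness of open stars makes any union of graph components relatively open in $U$. Openness holds because the complement of the open star of $\tau$ is the underlying space of the subcomplex of simplices not containing $\tau$.
\end{itemize}

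Your argument is more elementary and isolates the one point-set fact actually needed. The paper's is shorter and proves more: $U$ is homotopy equivalent to that order complex, although only its components are used. Your algorithmic accounting, including the remark that building the face relations lies outside the $O(C+E)$ bound, matches the paper's.
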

\begin{proof}
In the barycentric subdivision, discard coordinates corresponding
to simplices of $\mathcal D$ and renormalize the rest. Their sum
is positive on $U$. The straight homotopy stays in the same simplex
and outside $\Delta$, giving a deformation retraction onto the
full subcomplex of the remaining barycenters. Its one-skeleton is
the stated graph, so its components are exactly those of $U$.
One continuation from a relative-interior point of a seed simplex
labels its whole component. Search visits each vertex and adjacency
list once; restarting exhausts the graph. Duplicate endpoint labels
are merged in the answer list.
\end{proof}

A semialgebraic triangulation compatible with $\Delta_K$ provides
this hypothesis for Theorem~\ref{thm:finite-regions}. If labeling
the $j$th component costs $T_j$, including continuation and endpoint
identification, the total after the graph is constructed is
\begin{equation}\label{eq:region-growing-cost}
 O(C+E)+\sum_{j=1}^bT_j.
\end{equation}
Several components may carry the same label; no connectedness
assumption on their union is used. To obtain \emph{all} target zeros, every target
must additionally be reachable in the chosen parameter domain.

For adaptive exploration one can justify neighborhoods directly.
Along a known monotone branch $x_0(t)$ at $p_0$, put
$J_0(t)=D_xF(x_0(t),t,p_0)$. Suppose
$\|J_0(t)^{-1}\|\le\alpha^{-1}$ and, for
$\|y-x_0(t)\|\le\rho$, $\|p-p_0\|\le r$,
\[
 \|F(x_0(t),t,p)\|\le A\|p-p_0\|,\qquad
 \|D_yF(y,t,p)-J_0(t)\|
 \le L_x\|y-x_0(t)\|+L_p\|p-p_0\|.
\]
If $L_x\rho+L_pr\le\alpha/2$ and $Ar\le\alpha\rho/2$,
then $y\mapsto y-J_0(t)^{-1}F(y,t,p)$ is a contraction of the
closed radius-$\rho$ ball into itself. Its unique fixed points
give regular monotone branches. For a fixed target their endpoint
is the same zero. Smaller overlapping tubes handle several charts;
at folds one uses the augmented system~\eqref{eq:arclength}.

Agreeing labels at finitely many samples do not justify labeling
an entire cell. Near walls the validated neighborhoods may shrink
without a finite stopping time. Finite exhaustion therefore uses
a compatible decomposition or an explicit resolution assumption:
if every label contains a parameter ball of radius $r_*>0$, any
net of covering radius less than $r_*$ meets every label.
In a bounded $r$-dimensional parameter chart a regular mesh has
$O(h^{-r})$ cells, making this approach most suitable for small
parameter dimension. Projective chart seams must use the prescribed
identifications.

Figure~\ref{fig:polygon-regions} illustrates why the search should
allow disconnected regions for one solution. Two separated green
parts of the computed parameter diagram select the same common
bisector of two nonsymmetric polygonal sets. The diagram is numerical
evidence of disconnectedness, rather than a proof excluding arbitrarily
thin connections.
\begin{figure}[htbp]
\centering
\includegraphics[width=\linewidth]{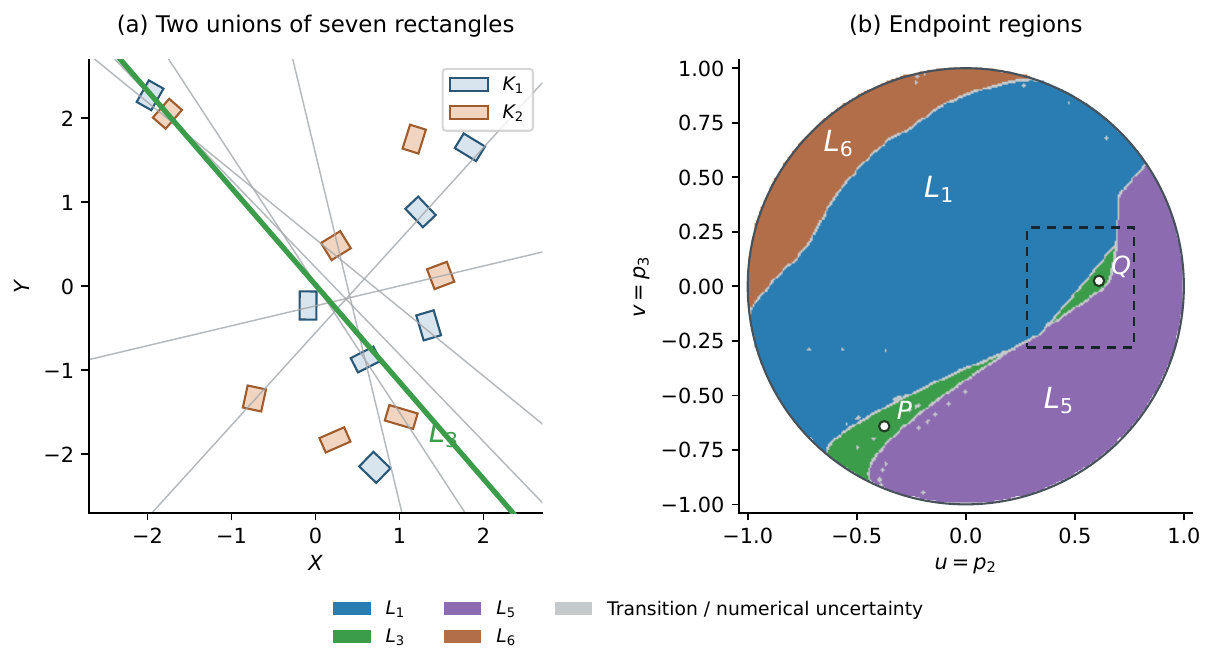}
\caption{Two polygonal sets and their computed endpoint regions.
The initial antipodal pair is represented on a hemisphere projected
onto the disk; orthogonal-projection models use a fixed frame rule.
The two green portions select the same cut $L_3$. Gray marks
unresolved transitions.}
\label{fig:polygon-regions}
\end{figure}
\FloatBarrier
\bigskip
\noindent\textbf{Acknowledgments.} The author thanks Roman Karasev and
Alexey Fakhrutdinov for helpful comments and discussions.

\bigskip
\noindent\textbf{AI use statement.} LLMs were used for proof refinement,
literature review, and manuscript revision.

\bigskip
\noindent The University of Texas Rio Grande Valley, Brownsville, Texas, USA\\
\href{mailto:oleg.musin@utrgv.edu}{\texttt{oleg.musin@utrgv.edu}}
\end{document}